\newtheorem{thm}{Theorem}[section]  
\newtheorem{cor}[thm]{Corollary}
\newtheorem{defin}[thm]{Definition} 
\newtheorem{lemma}[thm]{Lemma} 
\newtheorem{prop}[thm]{Proposition} 
\newtheorem{ass}[thm]{Assumption} 
\newtheorem*{defin*}{Definition}
\newcommand{\bbb}{\mbox{$\beta$}}
\newcommand{\Ggg}{\mbox{$\Gamma$}}
\newcommand{\Db}{\mbox{$\mathcal{D}_B$}}
\newcommand{\Da}{\mbox{$\mathcal {D}_A$}}
\newcommand{\Sss}{\mbox{$\Sigma$}}
\newcommand{\bdd}{\mbox{$\partial$}}
\newcommand{\calA}{\mbox{$\mathcal A$}}
\newcommand{\fc}{\mbox{$\mathfrak c$}}
\newcommand{\Ebar}{\mbox{$\overline{E}$}}
\newcommand{\cbar}{\mbox{$\overline{c}$}}
\begin{document}  

\title{Uniqueness in Haken's Theorem}   

\author{Michael Freedman}
\address{\hskip-\parindent
        Microsoft Station Q\\ 
        University of California\\
        Santa Barbara, CA 93106-6105\\ 
        and\\
        Mathematics Department\\
        University of California\\
        Santa Barbara, CA 93106-3080 USA}
\email{michaelf@microsoft.com}

\author{Martin Scharlemann}
\address{\hskip-\parindent
        Martin Scharlemann\\
        Mathematics Department\\
        University of California\\
        Santa Barbara, CA 93106-3080 USA}
\email{mgscharl@math.ucsb.edu}

%\thanks{Research partially supported by National Science Foundation grants.}

\date{\today}

\begin{abstract}  Following Haken \cite{Ha} and Casson-Gordon \cite{CG}, it was shown in \cite{Sc} that given a reducing sphere or $\bdd$-reducing disk $E$ in a Heegaard split manifold $M$, the Heegaard surface $T$ can be isotoped so that it intersects $E$ in a single circle.  Here we show that when this is achieved by two different positionings of $T$, one can be moved to the other by a sequence of
\begin{itemize}
\item isotopies of $T$ rel $E$
\item pushing a stabilizing pair of $T$ through $E$ and
\item eyegelass twists of $T$.
\end{itemize}
This last move is inspired by one of Powell's proposed generators for the Goeritz group \cite{Po}. 
\end{abstract}

\maketitle

It is a classic theorem of Haken \cite{Ha} that any Heegaard splitting $M = A \cup_T B$ of a closed orientable reducible $3$-manifold $M$ is reducible; that is, there is an essential sphere in the manifold that intersects $T$ in a single circle.  Casson-Gordon \cite[Lemma 1.1]{CG} refined and generalized the theorem, showing that it applies also to essential disks, when $M$ has boundary.  More specifically, if $E$ is an essential disk (resp $2$-sphere) in $M$ then there is an essential disk (resp $2$-sphere) $E^*$, obtained from $E$ by ambient $1$-surgery and isotopy, so that $E^*$ intersects $T$ in a single circle. It is now known \cite{Sc} that in fact we may take $E^* = E$.  An alternative argument using sphere complexes appears in \cite{HS}, but it applies only in the case that each component of $E$ and of $\bdd M$ is a sphere. The broader case considered in \cite{Sc}, in which $E$ may contain disks and the boundary may contain higher genus surfaces, is crucial to the inductive step here.

More broadly, even if $M$ has spherical boundary components (so $A$ and $B$ may themselves be reducible) and $E$ is any properly embedded surface so that each component of $E$ is either a disk or a sphere (i. e. a {\em disk/sphere set}) it is shown in \cite{Sc} that $T$ can be isotoped so that it is {\em aligned} with $E$.  Roughly, `aligned' means that $T$ intersects each component of $E$ transversally in {\em at most} one circle.  In particular the disk/sphere set may contain components that lie entirely in $A$ or $B$.  

Here we consider a naturally related uniqueness question:  
Suppose disk/sphere sets $E_0$ and $E_1$ are both aligned with $T$ and the sets $E_0$, $E_1$  are isotopic rel $\bdd$ in $M$.   Is there such an isotopy $E_s \subset M, 0 \leq s \leq 1$ from $E_0$ to $E_1$ so that each $E_s$ is aligned with $T$?

Counterexamples spring to mind, even when $M$ is irreducible and each $E_i$ is simply a single disk. 
\bigskip

%\begin{example}
 {\bf Example:}  Suppose $E_0$ is a $\bdd$-reducing disk for $M$ which intersects $T$ in a single circle, so it is aligned with $T$.  Suppose $T$ is stabilized, with stabilizing disks $D_A \subset A$, $D_B \subset B$ and both disks are disjoint from $E_0$.  A regular neighborhood of $D_A \cup D_B$ is a ball $\bbb$ which $T$ intersects in a standard genus $1$ summand.  We call such a pair $(\bbb, T)$ a {\em standard bubble}.  We can imagine $\bbb$ as a small ball, the regular neighborhood of a point in the destabilized Heegaard surface $T'$.  Now isotope $\bbb$ along a path in $T'$ which passes once through $E_0$, and let $E_1$ be the result of pushing $E_0$ by the resulting ambient isotopy of $M$.  Then $E_0$ and $E_1$ are isotopic in $M$, but typically they can't be isotopic via disks aligned with $T$, since the circles $T \cap E_i, i = 0, 1$ are not isotopic in $T$.
%\end{example}
%  
%\begin{example}
\bigskip

{\bf Example:} More subtly, suppose  $D_A$, $D_B$ are disjoint essential disks in $A$ and $B$ respectively, and $\gamma$ is a path in $T$ connecting their boundaries.  The complex $D_A \cup \gamma \cup D_B$ is called an {\em eyeglass} for $T$ (\cite[Definition 2.1]{FS}).  Associated to such an eyeglass is an isotopy of $T$ in $M$ back to itself (with support near the eyeglass) called an {\em eyeglass twist}.  It is illustrated in Figure \ref{fig:eyeglass1}.  Suppose $E_0$ is aligned with $T$ and the circle $E_0 \cap T$ essentially intersects the bridge $\gamma$ of the eyeglass.  Then the disk $E_1$ obtained by pushing $E_0$ along by the resulting ambient isotopy of $M$ typically will not be isotopic to $E_0$ via aligned disks, again since the circles $T \cap E_i, i = 0, 1$ are not isotopic in $T$.
%\end{example}

 \begin{figure}[ht!]
\labellist
\small\hair 2pt
\pinlabel  $D_A$ at 110 130
\pinlabel  $\gamma$ at 155 133
\pinlabel  $D_B$ at 200 135
\endlabellist
    \centering
    \includegraphics[scale=0.6]{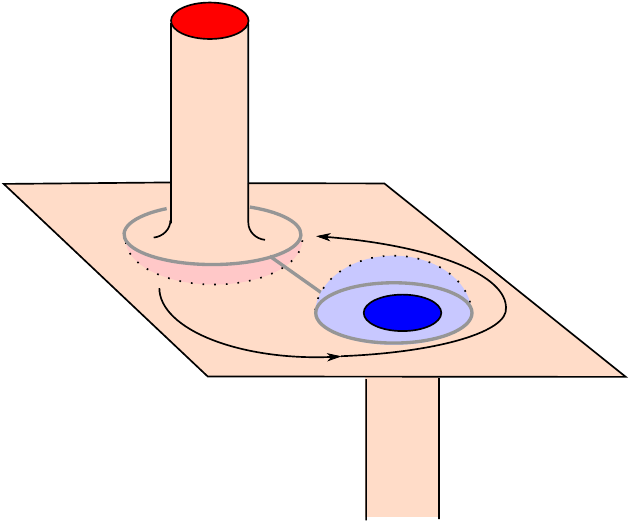}
    \caption{Eyeglass twist}  \label{fig:eyeglass1}
    \end{figure}

Our goal is to show that the two operations just described are essentially the only two obstacles to uniqueness.

\section{Background and results}

Suppose $M = A \cup_T B$ is a Heegaard splitting of a compact orientable $3$-manifold, in which (as in \cite{Sc}) $\bdd M$ may contain spheres.  In particular, since $\bdd_-A$ and $\bdd_-B$ may have spheres, the compression bodies $A$ and $B$ may themselves be reducible.  A sphere in $M$ is called inessential if it either bounds a ball in $M$ or is parallel in $M$ to a boundary component of $M$; a disk is inessential if it is parallel to a disk in $\bdd M$.

\begin{defin} \label{defin:reducer} 
A disk/sphere set $(E, \bdd E) \subset (M, \bdd M)$ is a compact properly embedded surface in $M$ so that each component of $E$  is either a disk or a sphere.  

%A disk component of $E$ that is essential and whose boundary lies in $\bdd_- A$ (resp $\bdd_- B$) is an $A$-disk (resp $B$-disk). 

A Heegaard surface $T$ and a disk/sphere set $E$ in $M$ are {\em aligned} if they are transverse, and each component of $E$ intersects $T$ in at most one circle. In addition, each disk component of $E - T$ is essential in either $A$ or $B$ and each annulus component of $E - T$ is vertical (see \cite[Section 2]{Sc} or below) in $A$ or $B$.  
\end{defin}

For example, a reducing sphere or $\bdd$-reducing disk for $T$, defined as a sphere or disk that intersects $T$ transversally in a single essential circle, are each important examples of an aligned disk/sphere set.  In our discussion there will be nothing lost by assuming that each component of a disk/sphere set is essential, so we implicitly make that assumption going forward.  
\bigskip

Suppose $(E, \bdd E) \subset (M, \bdd M)$ is an aligned disk/sphere set.  Let $S \subset M$ be a reducing sphere for $T$ that is disjoint from $E$ and cuts off a genus $1$ stabilizing summand of $T$ inside a ball that is disjoint from $E$.  Let $\gamma$ be an arc in $T$ with one end at a component $\Ebar$ of $E$, the other end at $S \cap T$, and $\gamma$ is otherwise disjoint from both $E$ and $S$.   Alter $\Ebar$  by tube summing it to $S$ along a neighborhood of $\gamma$ and call the result $\Ebar'$.  Replace $\Ebar$ by $\Ebar'$ in $E$ and call the result $E'$, a disk/sphere set still aligned with $T$.  

We can think of $S$ as a `bubble' that passes through $E$ to create $E'$, so we have:

\begin{defin}    $E'$ is obtained by a {\em bubble move} on $E$ along $\gamma$ with bubble $S$.
\end{defin}

  Note that $E$ and $E'$ are properly isotopic in $M$.
  \bigskip
  
  Now let $(D_A, \bdd D_A) \subset (A, \bdd A)$ and $(D_B, \bdd D_B) \subset (B, \bdd B)$ be disjoint essential disks that are also disjoint from $E$.  Let $\gamma$ be an arc in $T$ transverse to $E$, with one end at $\bdd D_A$, other end at $\bdd D_B$, and otherwise disjoint from $D_A \cup D_B$. Perform an eyeglass twist on $T$ using the eyeglass $D_A \cup \gamma \cup D_B$.  The eyeglass twist returns $T$ to itself, but may alter $E$.
  
\begin{defin} The image $E'$ of $E$ is said to be obtained from $E$ by an eyeglass twist.
\end{defin}

Note that $E'$ is still aligned with $T$, $E$ and $E'$ are properly isotopic in $M$ and, if $E$ is disjoint from the arc $\gamma$ then $E = E'$ .

\begin{defin} 
Suppose $E_0$ and $E_1$ are each disk/sphere sets aligned with $T$ in $M$.  An isotopy $E_s, 0 \leq s \leq 1$ from $E_0$ to $E_1$ in $M$ is an equivalence (and $E_0, E_1$ are equivalent) if $E_s$ is aligned with $T$ for all $s$.
\end{defin}

\begin{defin} 
$E_0$ and $E_1$ are {\em congruent} if a sequence of  equivalences, bubble moves and eyeglass twists carries $E_0$ to $E_1$.
\end{defin}

We intend to show:

\begin{thm}  \label{thm:main}  If $E_0, E_1$ are disk/sphere sets that are properly isotopic in $M$ and are both aligned with $T$, then $E_0$ and $ E_1$ are congruent.
\end{thm}

In conjunction with \cite{Sc}, this means that any disk/sphere set in $M$ is isotopic in $M$ to a set aligned with $T$ that is unique up to congruence.  

%\bigskip
%
%{\bf Example:}  Theorem \ref{thm:main} is obvious for reducing spheres $E_0, E_1$ that intersect $T$ in disjoint circles:  Then each component of $E_0 \cap E_1$ is a circle lying in either $A$ or $B$.  An innermost one in $E_0 \cap A$, say, cuts off also a subdisk of $E_1 \cap A$.  Since $A$ is irreducible, the latter disk can be isotoped to the former in $A$. Eventually such isotopies make $E_0$ and $E_1$ disjoint, so they are parallel in $M$.  The splitting that $T$ induces inside of the collar between them is simply a sum of stabilizing pairs, per Waldhausen (\cite{Wa}, \cite{R}), which can be passed through $E_0$ bubble by bubble until the spheres are equivalent. A similar argument applies if $E_0, E_1$ are $\bdd$-reducing disks.  So the interest focuses on cases in which $E_0 \cap E_1 \cap T \neq \emptyset$.  

\section{Sweepouts, spines, and labels for the graphic}

Here we briefly review the classical sweep-out technology on $M = A \cup_T B$.  Here $A$ (and similarly $B$) is a compression-body, with $\bdd_+ A = T$ and $\bdd_-A = (\bdd A) - T = \bdd M \cap A$.  This means that $A$ can be viewed (dually to the original definition in \cite{Bo}) as a compact connected orientable $3$-manifold obtained from $\bdd_- A \times I$ (which may be disconnected) by attaching $1$-handles to $\bdd_- A \times \{1\}$.  %The boundary of $A$ is the disjoint union of $\bdd_- A =$ surface $\times \{0\}$ and a connected surface denoted $\bdd_+ A$.
From this construction we see that $A$ deformation retracts to the union of $\bdd_- A$ and the cores of the $1$-handles, where the latter are extended down through $\bdd_- A \times I$ via the product structure.  

More generally, a {\em spine} $\Sss$ of a compression body $C$ is the union of $\bdd_- C$ and a certain type of graph in $C$: all valence $1$ vertices in the graph lie on $\bdd_- C$; all other vertices have valence $3$; and $C$ deformation retracts to $\Sss$; indeed $C - \Sss \cong \bdd_+ C \times (0, 1]$. (Sometimes we will not distinguish between $\Sss$ and a thin regular neighborhood of $\Sss$.)  $C$ has many spines, but in an argument that goes back to Whitehead \cite{Wh} (who was concerned with spheres, not disks) one can change one spine (viewed topologically) to any other by a sequence of ``edge-slides'', in which one edge is slid over others and along $\bdd_- C$ \cite[Section 1]{ST}, \cite[Proposition 3.4]{Sc}.  
\bigskip

A properly embedded annulus in a compression body $C$ is {\em spanning} if its two boundary components lie, one each, in $\bdd_+ C$ and $\bdd_- C$.  A disjoint collection $\calA$ of spanning annuli is {\em vertical} if there is a complete collection of meridian disks $\Delta$ for $C$ that is disjoint from $\calA$ and $\calA$ is vertical in $C - \eta(\Delta) \cong \bdd_- C \times I$. (It suffices that $\Delta$ be disjoint from a vertical spanning arc in each annulus \cite[Proposition 2.5]{Sc}).  

Let $E \subset C$ be a disjoint collection of vertical annuli, essential disks and essential spheres.   Essentially the same argument as in \cite[Section 1]{ST} shows that there is a spine $\Sss$ for $C$ with the properties:
\medskip

\begin{enumerate}[a)]
\item Each annulus in $E$ intersects $\Sss$ only in $\bdd_- C$.
\item Each disk in $E$ whose boundary lies on $\bdd_+ C$ intersects $\Sss$ in a single point in an edge.
\item Each sphere in $E$ intersects $\Sss$ in a single point in an edge.
\item Each disk in $E$ whose boundary lies on $\bdd_- C$ intersects $\Sss$ only in $\bdd_- C$.
\end{enumerate}
See Figure \ref{fig:comports}.

 \begin{figure}[ht!]
\labellist
\small\hair 2pt
\pinlabel  $b)$ at 120 240
\pinlabel  $c)$ at 160 130
\pinlabel  $d)$ at 230 120
\pinlabel  $a)$ at 370 135
\endlabellist
    \centering
    \includegraphics[scale=0.6]{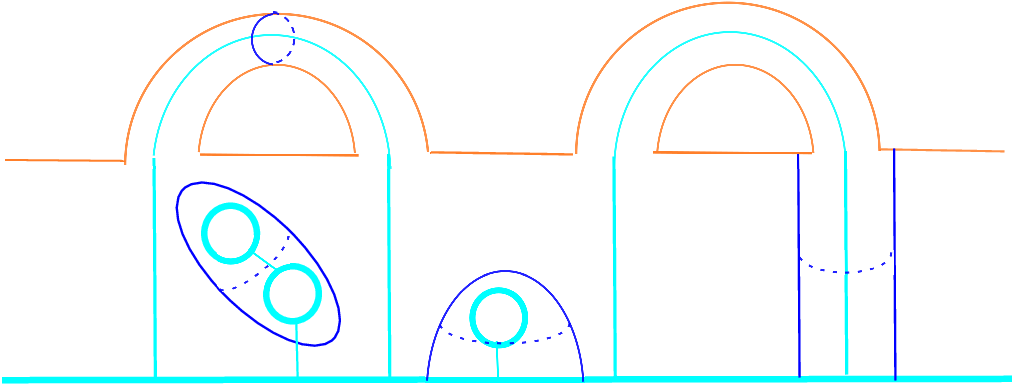}
    \caption{Comporting spine}  \label{fig:comports}
    \end{figure}

Notice that in the first two cases the complement of $\Sss$ in the component of $E$ is a half-open annulus; in the second two it is an open disk.  One can choose the parameterization $C - \Sss \cong \bdd_+ C \times (0, 1]$ so that the half-open annuli components of $E - \Sss$ are parameterized as $(E \cap \bdd_+ C) \times (0, 1]$ and the open disk components are parameterized by the standard height function on the interior of a round hemisphere of radius $<1$ in upper half-space.   We will say that such a spine and parameterization {\em comports with $E$}.  Note that, via Hatcher's work \cite{Ha1}, \cite{Ha2}, the exact parameterization involves no choice, in the sense that its space of parameters is contractible.

Combining these ideas, if $E' \subset C$ is another such collection, then one can move from a spine (and associated parameterization) that comports with $E$ to one that comports with $E'$ via a sequence of edge slides.  
\bigskip

Now we export all these ideas to the setting at hand:  a Heegaard split $M = A \cup_T B$ and two aligned disk/sphere sets $E_0$ and $E_1$ that are isotopic in $M$.  The two compression bodies will not be treated the same in the argument, so begin by assigning the names $A$ vs $B$ to the two compression bodies to ensure the following: If any components of the aligned $E_i$ are disjoint from $T$ then there is such a component in $A$.  

Each $E_i, i = 0, 1,$ intersects each compression-body $A$ (resp $B$) in a collection of vertical annuli, essential spheres and essential disks $E_{i, A} = E_i \cap A$ (resp $E_{i, B} = E_i \cap B$).  Choose spines $\Sss_{i, A} \subset A$ (resp $ \Sss_{i, B}\subset B$) so that each comports with $E_{i, A}$ (resp $E_{i, B}$).  For each $i = 0, 1$ combine the comporting parameterizations in each compression-body, to parameterize the entire complement of the spines in $M$ as $T \times (0, 1)$, picking the convention that the spine of $A$ is the limit of $T \times \{t\}$ as $t \to 0$. Then the complement of the spines in $M$ is swept-out by copies of $T$ in such a way that, generically, each copy of $T$ intersects each component of $E_i$ in at most one circle. Denote the copy $T \times \{t\}$ in such a sweepout by $T_t$.

The core argument will mirror that of \cite[Section 4]{FS}, with the isotopy $E_s, 0 \leq s \leq 1$ from $E_0$ to $E_1$ replacing what was there a sweepout of $S^3$ by level $2$-spheres.  In addition we use $s$ to simultaneously parameterize a movie of the sequence of edge slides on the spines that take $\Sss_{0, A} \cup \Sss_{0, B}$ to $\Sss_{1, A} \cup \Sss_{1, B}$.  
Together, this sweep-out and the isotopy $E_s$ (together with edge slides on the spines) produce a ``graphic" $\Ggg$ in the $(t, s)$-square $I \times I$.  

The graphic consists of open regions where $E_s$ and $T_t$ intersect transversely, edges or ``walls" where the two have a tangency, and cusp points where two types of tangencies cancel. As argued in \cite{RS} and discussed a bit further below, only domain walls corresponding to saddle tangencies need to be tracked. Cusps and tangencies of index 2 or 0 can be erased as they amount only to births/deaths of inessential simple closed curves of intersection in $E_s \cap T_t$. The most interesting event which occurs are transverse crossings of saddle walls; at this point two independent saddle tangencies occur. 

In more detail, consider first a region $R$ in which $E = E_s$ and $T = T_t$ intersect transversally, in a set $\fc$ of simple closed curves.  A curve $c \in \fc$ is either essential in $T$ or bounds a disk $D_c \subset T$.  In the latter case, the union of $D_c$ with a disk $E_c$ that $c$ bounds in a component $\Ebar$ of $E$ define an immersed sphere $S_c$ in $M$, which may or may not be null-homotopic in $M$.  If $\Ebar$ is a sphere, so $c$ bounds two disjoint disks in $\Ebar$, denote the corresponding immersed spheres $S_c$ and $S'_c$.  Note that $S_c$ and $S'_c$ cannot both be null-homotopic, since we are assuming $\Ebar$ is essential in $M$.  

\begin{defin} The curve $c \in \fc$ is an {\em essential} circle of intersection if either
\begin{itemize}
\item $c$ is an essential circle in $T$
\item $c$ is inessential in $T$, $\Ebar$ is a disk, and the immersed sphere $S_c$ is not null-homotopic
\item $c$ is inessential in $T$, $\Ebar$ is a sphere, and neither $S_c$ nor $S'_c$ is null-homotopic
\end{itemize}
Otherwise, $c$ is an {\em inessential} circle of intersection.  
\end{defin}

Denote by $\fc_e$ the collection of essential curves in $\fc$.  

\begin{lemma}  \label{lemma:hatcher} Suppose a curve $c \in \fc_e$ bounds a disk $E_c \subset E$ so that
each component of $\fc \cap int(E_c)$ is inessential. 

Then there is an isotopy of $E_c$ rel $c = \bdd E_c$ into either $A$ or $B$; the resulting disk in $A$ (resp $B$) is well-defined up to isotopy rel $\bdd$ in $A$ (resp $B$).  
\end{lemma}

Call this resulting properly embedded disk in $A$ or $B$ the {\em resolve} of $E_c$.  

\begin{proof} If $int(E_c)$ contains no circles in $\fc$ then there is nothing to prove.  If it does contain such circles, the argument is a minor variant of the standard innermost disk argument:  consider an innermost circle $\cbar$ bounding a disk $E_{\cbar} \subset E_c$.  Since $\cbar$ is innermost, $E_{\cbar}$ is disjoint from $T$.  Since $\cbar$ is inessential, it bounds a disk $D_{\cbar} \subset T$ and the embedded sphere $D_{\cbar} \cup_{\cbar} E_{\cbar}$ is null-homotopic in $M$.  Then this sphere bounds a ball and, by construction, that ball lies in either $A$ or $B$, say $A$.  The ball can be used to isotope $E_{\cbar}$ past $D_{\cbar}$, perhaps carrying other components of $E \cap A$ with it.  The isotopy removes all curves of intersection that lie in $int(D_{\cbar})$ (`secondary curves') as well as $\cbar$ itself.  The isotopy is also a homotopy, so whether each remaining curve in $\fc$ is essential or inessential is unchanged. 

We are indebted to Allen Hatcher for the following proof of uniqueness up to isotopy in $A$.  The argument is quite analogous to those used in his groundbreaking work \cite{Ha1}; an interested reader is particularly encouraged to examine the discussion in and around  \cite[344-345]{Ha1}.  

The process described above depends on the order in which innermost circles are chosen at each successive stage; we will show that different choices lead to disks in $A$, say, that are isotopic in $A$.  The order can be specified by a function $f$ which assigns a number $f(\cbar)$ in $(0,1)$ to each circle $\cbar \in (\fc \cap int({E_c}))$ subject to the ordering condition that $f(\cbar)<f(\cbar')$ if $\cbar$ lies inside $\cbar'$ in $E_c$.  
The isotopy of $E$ described above (which in Figure \ref{fig:Hatcher} we view as parameterized by a new variable $u \in [0, 1]$) eliminates $\cbar$ and its secondary curves during the time interval $[f(\cbar),f(\cbar)+\epsilon]$. 

 \begin{figure}[ht!]
\labellist
\small\hair 2pt
\pinlabel  $\cbar'$ at 95 490
\pinlabel  $\cbar$ at 93 400
\pinlabel  $E_c$ at 25 460
\pinlabel  $T$ at 25 430
\pinlabel  $u=f_t(\cbar)$ at 180 450
\pinlabel  $u=f_t(\cbar)+\epsilon$ at 180 350
\pinlabel $\cdot$ at 93 300
\pinlabel $\cdot$ at 93 290
\pinlabel $\cdot$ at 93 280
\pinlabel  $u=f_t(\cbar')$ at 185 250
\pinlabel  $u=f_t(\cbar')+\epsilon$ at 170 150
\pinlabel  $t<t_0$ at 100 80
\pinlabel $f_t(\cbar)+\epsilon<f_t(\cbar')$ at 100 60
\pinlabel  $E_t$ at 20 120
%next column
\pinlabel  $u=f_t(\cbar)$ at 380 450
\pinlabel  $u=f_t(\cbar)+\epsilon/2$ at 370 300
\pinlabel  $u=f_t(\cbar')+\epsilon$ at 370 150
\pinlabel  $t<t_0$ at 300 80
\pinlabel $f_t(\cbar)+\epsilon/2=f_t(\cbar')$ at 300 60
%next column
\pinlabel  $u=f_t(\cbar')$ at 600 450
\pinlabel  $u=f_t(\cbar')+\epsilon$ at 600 300
\pinlabel $\cdot$ at 500 200
\pinlabel $\cdot$ at 500 210
\pinlabel $\cdot$ at 500 190
\pinlabel  $u=f_t(\cbar)+\epsilon$ at 600 150
\pinlabel  $t\geq t_0$ at 500 80
\pinlabel $f_t(\cbar)\geq f_t(\cbar')$ at 500 60
\endlabellist
    \centering
    \includegraphics[scale=0.7]{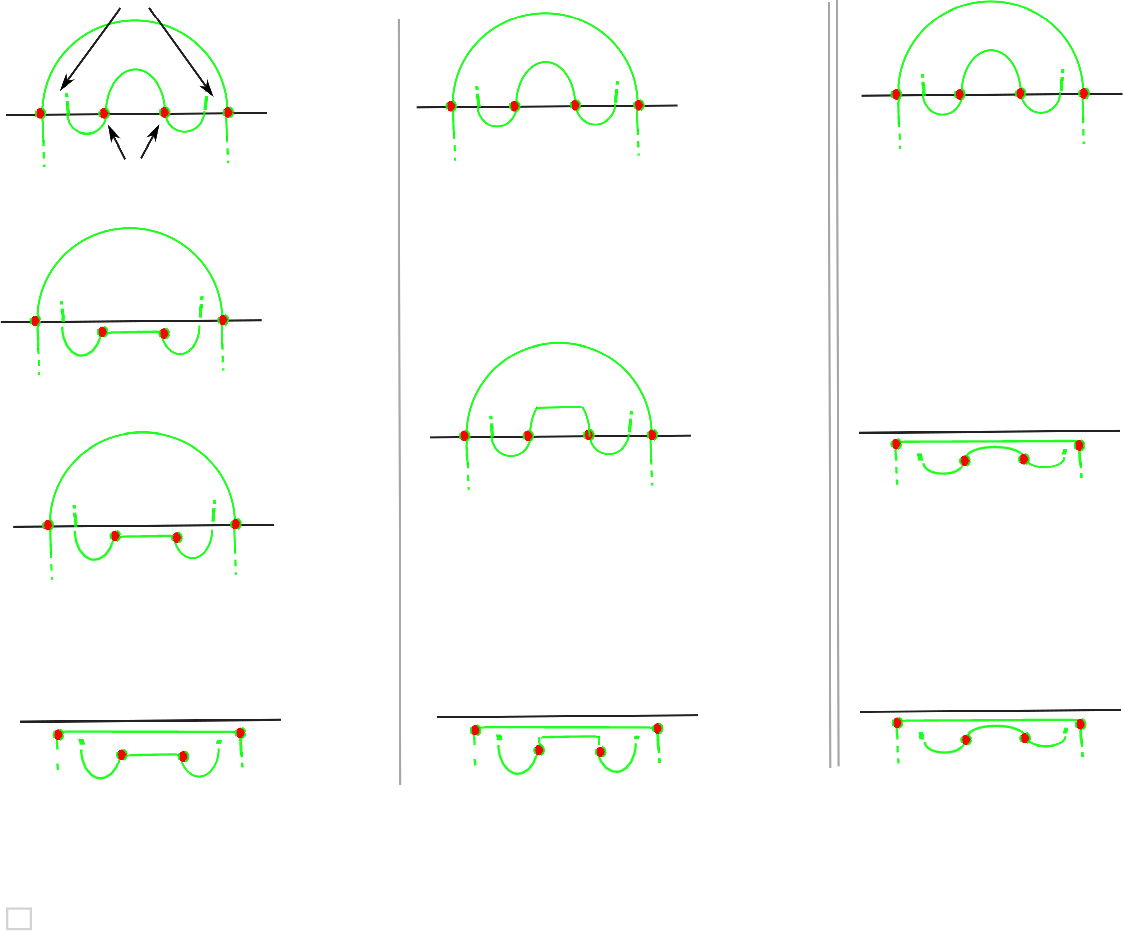}
    \caption{$t$ parameterizes isotopy of $u$-isotopies}  \label{fig:Hatcher}
    \end{figure}

Suppose one does this in two different ways, using functions $ f_0$ and $f_1$ to give isotopies of $E_c$ to disks $E_0$ and $E_1$ in $A$.  Since $f_0$ and $f_1$ are both maps of the (finite) set $\fc \cap int({E_c})$ to the convex set $(0, 1)$, we can connect $f_0$ and $f_1$ by a homotopy $f_t=(1-t)f_0 + tf_1$, with each $f_t$ satisfying the ordering condition.  In general position each $f_t$ will be injective, with finitely many exceptions, namely where two values are interchanged for circles $\cbar$ and $\cbar'$, neither of which lies inside the other in $E_c$.  If the balls $B$ and $B'$ determined by $\cbar$ and $\cbar'$ are disjoint the isotopies eliminating $\cbar$ and $\cbar'$ by pushing across $B$ and $B'$ have disjoint supports so can be performed independently.     If $B$ and $B'$ intersect there are two possibilities according to whether one is contained in the other (ie they lie on the same side of $T$) or they intersect just in a disk in their boundaries (they lie on opposite sides of $T$).  The two cases are treated in the same way. 

Suppose that $\cbar$ lies inside $\cbar'$ in the disk in $T$ bounded by  $\cbar'$ and suppose that $f_t(\cbar)=f_t(\cbar')$ for $ t=t_0$, with $f_t(\cbar)<f_t(\cbar')$ for $t<t_0$ and$f_t(\cbar)>f_t(\cbar')$ for $t>t_0$.  
Then $\cbar$ is primary for $t<t_0$ and secondary for $t>t_0$.  For $t<t_0$ we would perform the isotopy eliminating $\cbar$ during the time interval $[f_t(\cbar),f_t(\cbar)+\epsilon]$.  
However we modify this prescription by truncating this isotopy so that we only perform the initial part of it lying in the time interval $[f_t(\cbar),f_t(\cbar')]$.  Thus for $t=t_0$ we do not perform any of the isotopy.  
Then in the interval $[f_t(\cbar'),f_t(\cbar')+\epsilon]$ we do the isotopy eliminating $\cbar'$.  
For $t>t_0$ we only do the isotopy eliminating $\cbar'$, which automatically eliminates $\cbar$ as well.

In the end, each $f_t$ determines a process to isotope $E_c$ to a disk $E_t \subset A$ and the construction ensures that $E_t$ varies continuously with $t$.  So the disks $E_t$ describe an isotopy from $E_0$ to $E_1$ that lies entirely in $A$.
\end{proof}

Following Lemma \ref{lemma:hatcher}, label the region $R$  of the graphic as follows:
\begin{itemize}
\item Ignore inessential circles in $\fc$. 
\item Label the region $A$ if there is a component of $E$ that lies entirely in $A$ or if there is a circle $a \in \fc_e$ so that either 
\begin {itemize} 
\item  $a$ is innermost in $E$ among essential circles in $\fc$, and the resolve of the disk it bounds lies in $A$ or
\item  $a$ is parallel in $E$ to a boundary component lying in $\bdd_-A$, and the collar between them contains only inessential circles in $\fc$.% that are also inessential in the collar.
\end{itemize}
\item Label a region $B$ if there is neither a circle $a$ as above nor a component of $E$ entirely in $A$ but either
\begin{itemize} 
\item there is a component of $E$ that lies entirely in $B$ or
\item there is at least one circle $b \in \fc_e$ that is innermost in $E$ among circles of $\fc_e$, and the resolve of the disk it bounds lies in $B$.  
\end{itemize}
\end{itemize}
%
%Label a region of the graphic as follows:
%\begin{itemize}
%\item Ignore circles in $T_t \cap E_s$ that are inessential in $T_t$, 
%\item Label the region $A$ if there is a circle $a$ of $T_t \cap E_s$ so that either 
%\begin {itemize} 
%\item  $a$ is innermost in $E_s$ among essential circles in $T_t \cap E_s$, and the disk in $E_s$ that it bounds lies in $A$ or
%\item  $a$ is parallel in $E_s$ to a boundary component, and the collar between them lies in $A$
%\end{itemize}
%\item Label a region $B$ if there are {\em no} such circles $a$ as above, but there is {\em at least one  circle} $T_t \cap E_s$ that is innermost in $E_s$ among essential circles in $T_t \cap E_s$  and the disk in $E_s$ that it bounds lies in $B$. 
%\end{itemize}

Note that the definition of the labeling breaks symmetry: A vertical annulus in a collar of $\bdd A$ is counted as if it were an innermost disk but a vertical annulus in a collar of $\bdd B$ is not; and a region in which there are components of $E$ or resolves in both $A$ and $B$ is labeled $A$.  For example, if some component of $E$ is an aligned sphere intersecting $T$ in a single circle, then the region is labeled $A$.  

\bigskip 

In the figures illustrating our argument we will distinguish between the compression bodies $A$ and $B$ by color: pinkish (nominally red) will denote $A$, while azure (nominally blue) will denote $B$.  This distinction will color regions of $E$ cut out by $T$ alternately red and blue.  

For example, consider Figure \ref{fig:hidden}. The bi-colored horizontal plane shows part of a component of $E$.  Two parts of $T$ are shown
\begin{itemize}
\item a large-diameter vertical annulus, separating the visible part of $E$ into a blue unbounded region and a red `pair of pants'; and 
\item an inverted-U-shaped annulus that separates a blue $1$-handle of $B$ (the `blue tube')  from the part of $A$ that contains the red pair of pants.  
\end{itemize}
Figure \ref{fig:hidden} shows the blue tube bounded by part of $T$ being lowered through $E$.  In so doing the $(s, t)$ parameter designating the two surfaces passes from one region of the graphic to another.  An astute viewer will notice a gray area at the top of the blue tube reflecting ambiguity on what might lie there:  Is the top of the tube just a disk, or does a chimney filled with blue ascend through it?  This could be an important question, as we will discuss shortly.  

We will also use the red-blue coloring scheme on the graphic: Regions that are labeled $A$ will be colored red; those labeled $B$ will be colored blue.  Skip ahead to Figure \ref{fig:graphicsketch} to see how the coloring scheme might then appear in $I \times I$ containing the graphic.  The idea of the proof of Theorem \ref{thm:main} can already be discerned in this figure: Ultimately we will walk around the outside boundary of the big red region and observe that every step corresponds to some combination of an isotopy, a bubble move or an eyeglass twist.  

In more detail, the focus will be on the right-hand side of the region, the segment of the boundary of the region that lies between the points $(t, s) = (1, 0)$ and $(1, 1)$.  The labeling ensures that these points respectively represent the aligned system $E_0$ and $E_1$ and furthermore that any point in this segment for which $t = 1$ corresponds to some alignment, one that does not change with $s$ as long as $t =1$. This is discussed in Section \ref{sect:graphicbound}.

The remaining edges in the segment, those that lie in the interior of the square, and so border both a red and a blue region, are the central object of study.  That they are adjacent both to red and blue regions ensures that they provide a `weak alignment' of the system $E$ (analogous to weak reduction in Heegaard theory).  Section \ref{sect:weakreduce} discusses a natural algorithm, that proceeds from such a weak alignment to a full alignment of $E$, an alignment that is, via arguments in Sections \ref{sect:weakreduce} and \ref{sect:appendage}, well-defined up to congruence.  The critical point is that the weak alignment provides a way of breaking $M$ up into Heegaard split submanifolds, each with a lower genus splitting, where we can apply an inductive assumption.

The remaining crucial issue is whether the algorithmically defined alignment of $E$ changes when passing through a vertex in the graphic; that is, whether adjacent `border edges' (those edges in the graphic dividing red from blue) determine the same alignment.  Sections \ref{sect:forbidden} and \ref{sect:graphicreturn} show that indeed they do.  This completes the argument: there is no change in alignment of $E$ (up to congruence) in moving along the segment from the corner of the graphic representing $E_0$ to the corner representing $E_1$.  

\bigskip

Return now to the ongoing argument:  The first labeling rule above -- ignore circles of intersection that are inessential in $T_t$ -- raises a  {\it caveat}:

When we say that an essential circle $a$ bounds a disk in $A$ (similarly for $B$),  what is technically meant is that there is a planar surface $P \subset A$ whose boundary consists of $a$ and, possibly, a collection of circles that are inessential in $\fc$.  

\begin{figure}[ht!]
\labellist
\small\hair 2pt
\pinlabel  $E$ at 30 120
%\pinlabel  $\gamma$ at 155 133
\pinlabel  $T$ at 195 150
\pinlabel  $T$ at 185 100
\endlabellist
    \centering
    \includegraphics[scale=0.6]{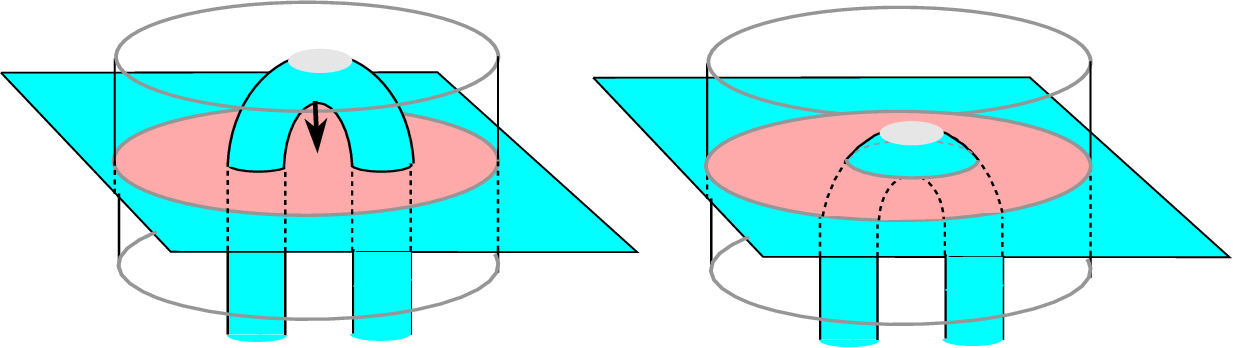}
\caption{Label change or not?} 
 \label{fig:hidden}
    \end{figure}

As a consequence, crossing from one region of the graphic to another may change the label from $B$ to $A$ in surprising ways, as shown in Figure \ref{fig:hidden}.  As the saddle point in $T$ passes down through $E$, the label will change from $B$ to $A$ if the grey disk at the top is inessential in $B$.  If the grey disk is essential in $B$, so $T$ ascends beyond it, the label remains $B$.  The difference between the two situations cannot be determined just by examining the behavior of $E \cap T$.  

\section{Labels around the boundary of the graphic} \label{sect:graphicbound}

In thinking about the labeling scheme, consider first the situation near $s = 0, 1$, where the parameterization $T \times (0, 1)$ in each case comports with $E$.  Observe first that the spine of $A$ must intersect some component of $E$: indeed a component disjoint from the spine can be taken to lie entirely in $B$, but our choice of  names $A$ vs $B$ then guarantees that there is a component of $E$ that can be made to lie entirely in $A$ and therefore must intersect the  spine of $A$.  Such a component in $E_i$ is swept out by a single circle (which will disappear entirely,  at a maximum, on any component that lies entirely in $A$).  As $T_t$ rises, one side of the circle is a disk or annulus lying entirely in $A$; if the circle disappears the entire component of $E_i$ lies in $A$ .  Thus all regions near $s = 0, 1$ are labeled $A$.  

Near $t = 0$, $T_t$ is near a spine of $A$, which as we have seen must intersect some component of $E_s$.  The intersection with the spine is either a point, or possibly a component of $\bdd E_s$ in $\bdd_- A$.  This means that near $t=0$, $S_s \cap T_t$ will cut out from $E_s$ a small disk in $A$ (or a thin annulus near a component of $\bdd E_s$ at $\bdd_- A$).  Thus the regions adjacent to $t=0$ are again all labeled $A$.  

The labeling of regions near $t=1$ is more subtle and contains a warm-up for the general case.   Because $T$ is near a spine of $B$, each circle of intersection with $E_s$ either bounds a disk in $B$ or is parallel in $B$ to a boundary component, as we have just noted.  Consider first the three simple cases that can arise when $E = E_s$ is a single component:
\begin{itemize}
\item Suppose $E$ is a reducing sphere for $M$.  Since each component of $E \cap T$ bounds a small disk in $B$, a region will be labeled $A$ if and only if there is at most one circle of intersection, in other words, if and only if $E$ is aligned with $T$.
\item If $E$ is a $\bdd$-reducing disk whose boundary lies in $\bdd_- B$ then $T$ intersects $E$ in at least a circle parallel to $\bdd E$ and the collar lies in $B$.  But if there is any other circle of intersection, the small disk it bounds lies in $B$, so there are no disks in $A$.  Again, a region will be labeled $A$ if and only if $E$ is aligned with $T$, indeed a $\bdd$-reducing disk for $T$.
\item Suppose $E$ is a $\bdd$-reducing disk for $M$ whose boundary lies on $\bdd_- A$.  Then $E$ can only intersect the spine of $B$ in points, and if it does so in more than one point then $E \cap A$ would not contain a disk or annulus and so would not be labeled $A$.  We deduce that the region is labeled $A$ if and only if the spine of $B$ intersects $E$ in at most one point, in which case $T$ intersects $E$ in at most one circle.  So, once again, the region is labeled $A$ if and only if $E$ is aligned with $T$.
\end{itemize}

Hence we have %(see Figure \ref{fig:graphicsketch}):

\begin{prop} Suppose $E$ has a single component and the regions adjacent to the side $t = 1$ are all labeled $A$.  Then $E_0$ and $E_1$ are equivalent aligned disks.  \qed
\end{prop}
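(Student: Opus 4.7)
The plan is to use the boundary-labeling to identify a horizontal slice of the graphic along which each $E_s$ can be read directly as a $T$-reducer, and then to translate this into an honest equivalence by pulling back along the sweepout.

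First, consolidate what is known on the boundary $\bdd(I \times I)$ of the graphic. The text has already shown that all graphic regions adjacent to the three sides $s = 0$, $s = 1$, and $t = 0$ are labeled $A$; the hypothesis supplies the same for $t = 1$. Consequently every region meeting $\bdd(I \times I)$ is labeled $A$.

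Next, because only finitely many regions touch the segment $\{t = 1\}$ and all of them are labeled $A$, I can choose $\epsilon > 0$ small enough that the entire horizontal segment $\{(1-\epsilon, s) : s \in [0,1]\}$ lies in $A$-labeled regions. At every point $(1-\epsilon, s)$ of this segment we are in an $A$-labeled region near $t = 1$, so the three-case analysis already carried out in the text (which applies precisely because $E$ has a single component) forces $E_s \cap T_{1-\epsilon}$ to be a single essential circle. In other words, $E_s$ is a $T_{1-\epsilon}$-reducer for every $s \in [0, 1]$.

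Finally, promote this to an equivalence with respect to the fixed Heegaard surface $T$. For each $s$ the sweepout supplies an ambient isotopy $\Phi^{(s)}$ of $M$ carrying $T_{1-\epsilon}$ back to $T$. By the contractibility of the parameterization space already invoked in the text (Hatcher \cite{Ha1}, \cite{Ha2}), these isotopies can be chosen continuously in $s$, and because the sweepouts at $s = 0, 1$ comport with $E_0, E_1$, one can further arrange $\Phi^{(s)}$ to fix $E_s$ set-wise at the two endpoints. Setting $F_s := \Phi^{(s)}(E_s)$ then yields a continuous isotopy from $E_0$ to $E_1$ for which $F_s \cap T = \Phi^{(s)}(E_s \cap T_{1-\epsilon})$ is a single essential circle for every $s$, i.e., an equivalence in the sense of the definition.

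The only genuinely technical point is the endpoint-matching of $\Phi^{(s)}$ at $s = 0, 1$ in the last step. I expect this to be where care is needed but it is not a substantive obstacle: it is exactly the place where comportment of the sweepout at the endpoints combines with the contractibility of the space of parameterizations to let the family $\Phi^{(s)}$ be chosen so that the endpoints match $E_0$ and $E_1$ on the nose.
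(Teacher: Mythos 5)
Your proof is essentially the paper's own argument: the proposition carries a \qed precisely because the authors regard it as immediate from the preceding three-case analysis showing that, for a single-component $E$, an $A$-label near $t=1$ is equivalent to $E$ being a reducer for $T_t$. You correctly fill in the step that moves from ``$E_s$ is a $T_{1-\epsilon}$-reducer for every $s$'' to an honest equivalence rel the fixed $T$, using the comporting parameterization at the endpoints $s=0,1$. Two small remarks: first, the invocation of Hatcher's contractibility is overkill here --- the sweepout itself already supplies a continuous-in-$s$ family of isotopies carrying $T_{1-\epsilon}$ to $T$, so no appeal to uniqueness of the parameterization is needed; second, you should note (though it is easily checked) that the horizontal slice $\{t = 1-\epsilon\}$ cannot cross any saddle wall of the graphic, since two adjacent regions near $t=1$ that are both labeled $A$ would, by the three-case analysis, both exhibit exactly one circle of intersection, contradicting the fact that a saddle changes the circle count by one; so there is in fact a single $A$-region abutting the side $t=1$ and the slice stays inside it.
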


When $E$ has many components, the argument is more complicated, since our labeling scheme assigns the label $A$ if just one component of $E_s$ is aligned.  So at this point we will make a crucial inductive assumption.  Following \cite{Sc} define the {\em complexity} of $M$ to be the pair $(g, n)$, lexicographically ordered, where $g$ is the genus of $T$ and $n$ is the number of spheres in the boundary of $M$.  

\begin{ass} \label{ass:induct}  Theorem \ref{thm:main} is true for any $(M', T')$ of complexity lower than $(M, T)$.  \end{ass}

Note that Theorem \ref{thm:main} is obvious when $genus(T) = 1$ and $n = 0$.  
\bigskip

Suppose in a region of the graphic a component $\overline{E}$ of  $E = E_s$ is aligned.  Then there is a natural way of isotoping $T$ rel $\overline{E}$ to align all of $E$, as described in \cite[Proposition 4.2]{Sc}:  Cut along $\overline{E}$ to obtain a new Heegaard split manifold $M' = A' \cup_{T'} B'$ (disconnected if $E$ is separating).  Each component of $(M', T')$ has complexity less than $(M, T)$ so not only can $T'$ be aligned in $M'$ with the family $E - \overline{E}$ (\cite{Sc}) but by Assumption \ref{ass:induct} this alignment for $(M', T')$ is well-defined up to congruence.  

Such an alignment also aligns $T$ with all of $E$, though there is one technical point: When $\overline{E}$ is not disjoint from $T$, so it intersects $T$ in a single circle $c$, we must ensure that the isotopy used to align $T'$ does not create more circles of intersection parallel to $c$ in $T$.  For this we  use \cite[Proposition 3.8]{Sc} to ensure that the tube $D^2 \times I$ in $B$, say, bounded by $c \times I$ (whose core $\{0\} \times I$ is denoted there by arcs $\beta$) is properly isotoped so as not to intersect other components of $E$.  Once this is done, it follows as in \cite[Proposition 3.7]{Sc} that these arcs $\beta$ may be properly isotoped at the appropriate moment to be disjoint from any bubble that is about to pass through $E$ or from the lenses involved in any eyeglass twist disjoint from $E$.  The upshot is that the alignment of $T$ with $E$ just described is well-defined up to congruence.  We say that this alignment is generated by the initial alignment of $\overline{E}$.  

\begin{lemma} \label{lemma:generate}
Suppose $\overline{E}'$ is another aligned component of $E$.  Then the alignment generated by $\overline{E}'$ is congruent to that generated by $\overline{E}$.

If $E$ is aligned, then the alignment is congruent to that generated by any of its members.

\end{lemma}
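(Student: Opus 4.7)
The plan is to establish the second statement first, and then to deduce the first by reducing to a symmetric ``joint reduction'' that cuts $M$ along both $\Ebar$ and $\Ebar'$. Both parts rely on Assumption \ref{ass:induct}, which applies in the manifolds $M'$ and $M''$ obtained by reducing along one (resp.\ both) of these disks or spheres, since each component of their splitting surface has strictly smaller genus than $T$.

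For the second statement, suppose $E$ is itself a $T$-reducing system and $\Ebar \in E$. Reducing $(M,T)$ along $\Ebar$ produces $(M', T')$ in which the family $E - \Ebar$ is already a $T'$-reducing system, since $T$ and $E - \Ebar$ are unchanged outside a neighborhood of $\Ebar$. By Assumption \ref{ass:induct}, any $T'$-reducing system isotopic in $M'$ to $E - \Ebar$ is congruent in $M'$ to $E - \Ebar$. Each elementary move of a congruence (an equivalence, bubble move, or eyeglass twist) has local support which may be chosen disjoint from the two capping disks or balls produced by the reduction, so the $M'$-congruence lifts to an $M$-congruence rel $\Ebar$. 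Hence the solution generated by $\Ebar$ is congruent in $M$ to $E$.

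For the first statement, let $F$ and $F'$ be the solutions generated by $\Ebar$ and $\Ebar'$, and construct a common joint solution $\widehat{F}$ as follows. Reduce $(M,T)$ simultaneously along $\Ebar \cup \Ebar'$ (possible since they are disjoint $T$-reducers) to obtain $(M'', T'')$; by \cite{Sc} and Assumption \ref{ass:induct}, there is a $T''$-reducing system in $M''$ isotopic to $E - (\Ebar \cup \Ebar')$, unique up to congruence in $M''$, and combining it with $\Ebar$ and $\Ebar'$ yields $\widehat{F}$. To show $F$ is congruent to $\widehat{F}$, observe that $F - \Ebar$ is a $T'$-reducing system in $M'$ containing $\Ebar'$; the second statement applied inside $M'$ (valid under Assumption \ref{ass:induct}) says that $F - \Ebar$ is congruent in $M'$ to the solution generated there by $\Ebar'$. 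That solution is obtained by further reducing $M'$ along $\Ebar'$ and then invoking \cite{Sc} and Assumption \ref{ass:induct} in $M''$, so it coincides up to congruence with $\widehat{F} - \Ebar$. Lifting, $F$ is congruent to $\widehat{F}$ in $M$; symmetrically $F'$ is congruent to $\widehat{F}$, so $F$ is congruent to $F'$.

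The main obstacle is the lifting step that appears in each direction: a bubble move or eyeglass twist witnessing a congruence in $M'$ or $M''$ might a priori be supported near one of the capping disks or balls introduced by the reduction, so it would not immediately make sense in $M$. The plan is to dispose of this by noting that each move is supported in a regular neighborhood of a compact object (a bubble sphere together with a short tubing arc, or an eyeglass), and such an object can be isotoped within the reduced manifold off the capping region; the corresponding move then sits in a neighborhood that embeds in $M \setminus \Ebar$ and transfers verbatim to $M$. A small but unavoidable check is that the stabilizing pair defining a bubble move can be arranged disjoint from $\Ebar$, which is routine because the bubble is a standard genus-$1$ summand inside a ball.
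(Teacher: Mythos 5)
Your proposal is correct and takes essentially the same approach as the paper: forming the common solution obtained by compressing along both $\overline{E}$ and $\overline{E}'$ and invoking Assumption \ref{ass:induct} to identify each singly generated solution with it. The paper's proof is a one-sentence version of this; your write-up spells out the second statement and the routine lifting step that the paper leaves implicit.
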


\begin{proof} The alignment obtained as above by cutting along both $\overline{E}'$ and $\overline{E}$ is congruent to that generated by either, per Assumption \ref{ass:induct}.
\end{proof}

In view of Lemma \ref{lemma:generate} we can simply call such an alignment in the region {\em internally generated} without naming the component of $E$ that generates it.  

\begin{lemma} \label{lemma:pairgenerate}
Suppose two regions of the graphic, adjacent along an edge of each, have internally generated alignments.  Then these alignments are congruent.  
\end{lemma}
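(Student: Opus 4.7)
The plan is to exhibit a single $T$-reducer common to both regions and then reduce along it, invoking the inductive Assumption \ref{ass:induct}.

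First I analyze the shared edge. Such an edge corresponds to a saddle tangency between $T_t$ and $E_s$, which is localized on a single component $E^*$ of $E$. Hence, for every other component $\overline{E}$ of $E$, the pattern $T \cap \overline{E}$ is preserved up to isotopy as the parameter crosses the edge, and so ``$\overline{E}$ is a $T$-reducer'' persists from one region to the other. By contrast, the saddle is a $1$-handle attachment to $E^* \cap T$, so $|E^* \cap T|$ changes by exactly one, and therefore $E^*$ can be a $T$-reducer in at most one of $R_1, R_2$. Let $\overline{E}_i$ be a $T$-reducing component of $E$ witnessing the internally generated solution in $R_i$. If $\overline{E}_1 \neq E^*$, then $\overline{E}_1$ remains a reducer in $R_2$, and I set $\overline{E} := \overline{E}_1$; otherwise $\overline{E}_1 = E^*$, in which case $E^*$ is not a reducer in $R_2$, forcing $\overline{E}_2 \neq E^*$, and I set $\overline{E} := \overline{E}_2$. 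Either way, $\overline{E}$ is a $T$-reducer in both regions, and by Lemma \ref{lemma:generate} the solution generated by $\overline{E}$ in each region is congruent to the internally generated solution there.

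Next I carry out the reduction. Reducing (or $\bdd$-reducing) $M$ along $\overline{E}$ yields a Heegaard-split manifold $M' = A' \cup_{T'} B'$ whose splitting surface has strictly smaller genus than $T$. Since the ambient isotopy $E_s$ tracks components, it descends to an ambient isotopy in $M'$ taking $E_{s_1} - \overline{E}$ to $E_{s_2} - \overline{E}$. By Assumption \ref{ass:induct}, any two $T'$-reducing positionings of isotopic systems in $M'$ are congruent, hence the two solutions in $M'$ are congruent. The resulting congruence in $M'$, a sequence of equivalences, bubble moves, and eyeglass twists, takes place away from the scar left by $\overline{E}$ and extends through the reduction to a congruence in $M$ that carries one internally generated solution to the other.

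The step I expect to be most delicate is the case analysis producing a common reducer: specifically, ruling out the possibility that both regions have $E^*$ as their only reducer, which becomes automatic once one knows that a generic edge event involves only a single component of $E$. Extending congruence moves from $M'$ back to $M$ should be essentially routine, since equivalences, bubble moves, and eyeglass twists are all local and supported away from the surgery locus.
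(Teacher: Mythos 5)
Your argument is correct and follows essentially the same route as the paper: observe that the edge corresponds to a single saddle tangency on one component of $E$, use this to extract a component $\overline{E}$ that is a $T$-reducer in both adjacent regions, and then invoke Lemma \ref{lemma:generate} together with Assumption \ref{ass:induct}. Your second paragraph explicitly spells out why the solution generated by the common $\overline{E}$ is the same on either side of the edge, a step the paper's proof leaves implicit inside the phrase ``congruence of solutions follows by Lemma \ref{lemma:generate}''; this is a reasonable elaboration rather than a different approach.
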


\begin{proof} Let $\overline{E}$ and $\overline{E}'$ be generators in adjacent regions $R, R'$ respectively. If $\overline{E} = \overline{E}'$ congruence follows by definition, so we assume $\overline{E} \neq \overline{E}'$.  The edge between regions $R$ and $R'$ represents $E$ passing through a single saddle tangency with $T$, a point that may lie on $\overline{E}$ or $ \overline{E}'$ but not both.  Thus at least one of the two is a generator in both regions, from which congruence of the alignments follows by Lemma \ref{lemma:generate}.  
\end{proof}

Return now to the setting for Theorem \ref{thm:main} and we have:

\begin{prop} \label{prop:side1} Suppose the regions adjacent to the side $t = 1$ are all labeled $A$.  Then $E_0$ and $E_1$ have congruent alignments.
\end{prop}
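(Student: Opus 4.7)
The plan is to exploit the hypothesis by walking along the top edge $t=1$ of the graphic from $s=0$ to $s=1$ and chaining Lemma \ref{lemma:pairgenerate} across each wall crossed. The internally generated solutions at the two endpoints of this walk will turn out to be $E_0$ and $E_1$ respectively, so congruence of the two will follow from congruence being propagated across every wall on the way.

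The crux is verifying that every $A$-labeled region adjacent to $t=1$ admits an internally generated solution in the sense of Lemma \ref{lemma:generate}. Near $t=1$, $T_t$ lies in a thin neighborhood of the spine of $B$, so each essential circle of $T_t \cap E_s$ is either the meridian of a point where an edge of the spine pierces a component of $E_s$, or (in the case $\bdd \overline{E} \subset \bdd_- B$) a circle parallel to such a boundary; in either case it bounds an innermost region in $B$ in $E_s$, namely a small disk near the piercing or a thin collar of $\bdd_- B$. For a component $\overline{E}$ of $E_s$ with $m \geq 1$ essential intersection circles, each such circle therefore has its innermost side lying in $B$, while its other side, lying in $A$, contains the remaining $m-1$ essential circles. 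Consequently an $A$-innermost essential circle (as in the first labeling rule) can occur in $\overline{E}$ only when $m=1$, i.e., only when $\overline{E}$ itself is a $T_t$-reducer. Since the region is labeled $A$, at least one component of $E_s$ must be such a reducer, supplying the internally generated solution.

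With this in hand, I would complete the walk as follows. Near the corner $(s,t)=(0,1)$, the comporting sweepout at $s=0$ makes every component of $E_0$ a $T_t$-reducer for every $t$, so the internally generated solution in the region containing that corner is $E_0$ (up to equivalence); similarly near $(1,1)$ it is $E_1$. Walking between the corners along $t=1$, every wall crossed separates two $A$-labeled regions each admitting an internally generated solution by the previous paragraph, so Lemma \ref{lemma:pairgenerate} transports the congruence class of solutions step by step. Chaining these congruences yields that $E_0$ and $E_1$ are congruent. The main obstacle is the internal-generation claim at $t=1$ described above; once it is established, the walk and chaining are essentially formal.
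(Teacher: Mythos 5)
Your proposal is correct and follows the same route as the paper: observe that an $A$-label near $t=1$ forces some component of $E_s$ to be a $T_t$-reducer (hence an internally generated solution exists in each such region), then chain Lemma \ref{lemma:pairgenerate} along the side $t=1$. The paper's proof is more terse because it relies on the preceding ``warm-up'' discussion (the three single-component cases near $t=1$) to justify the existence of a self-generated solution; you re-derive that observation per-component and also make explicit that the end regions produce $E_0$ and $E_1$, but the logical structure is identical.
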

\begin{proof} The label $A$ implies that each region adjacent to the side $t=1$ has a self-generated alignment.   Lemma \ref{lemma:pairgenerate} ensures that the congruence class of the alignment doesn't change as we move along the side $t=1$ from $E_0$ to $E_1$.
\end{proof}

\section{A forbidden labeling around a vertex} \label{sect:forbidden}

Focus now on how labels behave around a vertex in the interior of the graphic $\Gamma$. Such a vertex corresponds to a position of $T = T_t$ in which it has two simultaneous tangency points with $E = E_s$. The non-trivial cases arise when both points of tangency lie on a single component $\Ebar \subset E$. If $\Ebar$ is a disk, a simple combinatorial argument shows that there are 15 possible configurations of these tangency points, shown in Figure \ref{fig:vertexresolved}.   The same diagram can be used when $\Ebar$ is a sphere, but far fewer panels are needed because of the extra symmetry this introduces.  For example, panels 10, 11 and 12 are the same in a sphere, as are 13 and 14. We will proceed assuming $\Ebar$ is a disk; if it is a sphere, just delete an open disk near a point in $A$, converting it to an $A$-disk, and apply the arguments there.

 There are typically many more circles in $\Ebar \cap T$ than are shown in the panels of Figure \ref{fig:vertexresolved}; these only show the components containing tangency points.  The two tangency points will be denoted $\rho = \rho_{\pm}$; the 4 quadrants near it correspond to the 4 ways of resolving the tangencies, each by perturbing $T$ slightly near $\rho$.

\begin{figure}[ht!]
\labellist
\small\hair 2pt
\pinlabel  $1$ at 85 195
\pinlabel  $2$ at 160 195
\pinlabel  $3$ at 235 195
\pinlabel  $4$ at 305 195
\pinlabel  $5$ at 380 195
\pinlabel  $6$ at 85 105
\pinlabel  $7$ at 160 105
\pinlabel  $8$ at 235 105
\pinlabel  $9$ at 305 105
\pinlabel  $10$ at 380 105
\pinlabel  $11$ at 85 15
\pinlabel  $12$ at 160 15
\pinlabel  $13$ at 230 15
\pinlabel  $14$ at 305 15
\pinlabel  $15$ at 385 15
\endlabellist
    \centering
    \includegraphics[scale=0.7]{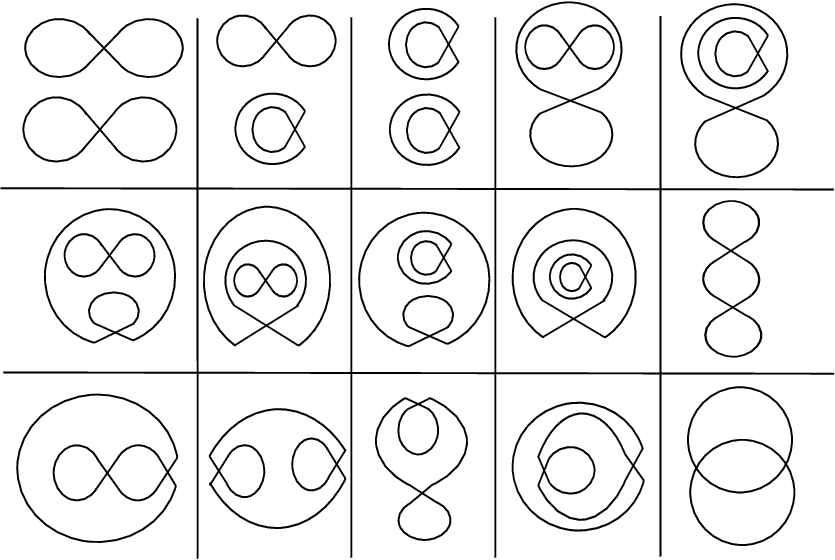}
\caption{At a vertex in the graphic $\Gamma$ } 
 \label{fig:vertexresolved}
    \end{figure}  
    
The picture in $T$ can be more complicated than these panels suggest.  For example, panel 15 might look like Figure \ref{fig:resolved} in $T$.  

\begin{figure}[ht]
%  \centering
  \begin{tikzpicture}[scale=1.5]
    \draw (0.75, -1.5) -- (4.75,-1.5);
    \draw (2.75, -3.5) -- (2.75,0.5);
    \draw  (1,-2.4) ellipse (0.08 and 0.25);
    \draw  (2.3,-2.65) arc (-90:90:0.08 and 0.25);
    \draw [dashed] (2.3,-2.15) arc (90:270:0.08 and 0.25);
    \draw (1,-2.15) to [out=25,in=155] (2.3,-2.15);
    \draw (1,-2.65) to [out=-25,in=205] (2.3,-2.65);
    \draw  (1.3,-2.05) arc (90:-90:0.1 and 0.35);
    \draw  [dashed] (1.3,-2.05) arc (90:270:0.1 and 0.35);
    \draw  (1.85,-2.4) arc (0:-180:0.2 and 0.1); %bottom inner curve
    \draw  (1.8,-2.46) arc (0:180:0.15 and 0.07); %top inner curve
    %Q1
    \draw  (3.2,-0.6) ellipse (0.08 and 0.25); %leftmost ellipse
    \draw (3.2,-0.35) to [out=25,in=155] (4.5,-0.35); %top curve
    \draw (4.5,-0.35) arc (90:-90:0.08 and 0.25); %right ellipse half (solid)
    \draw [dashed] (4.5,-0.35) arc (90:270:0.08 and 0.25); %right ellipse half (dashed)
    \draw (3.2,-0.85) to [out=-25,in=205] (4.5,-0.85); %bottom curve
    \draw  (4.06,-0.6) arc (0:-180:0.2 and 0.1); %bottom inner curve
    \draw  (4.01,-0.66) arc (0:180:0.15 and 0.07); %top inner curve
    \draw (4.22,-0.25) arc (90:-90:0.1 and 0.35); %auxiliary curve right
    \draw [dashed] (4.22,-0.25) arc (90:270:0.1 and 0.35); %auxiliary curve left
    %Q4
    \draw  (3.2,-2.4) ellipse (0.08 and 0.25); %leftmost ellipse
    \draw (3.2,-2.15) to [out=25,in=155] (4.5, -2.15); %top curve
   \draw (4.5,-2.15) arc (90:-90:0.08 and 0.25); %right ellipse half (solid)
    \draw [dashed] (4.5,-2.15) arc (90:270:0.08 and 0.25); %right ellipse half (dashed)
    \draw (3.2,-2.65) to [out=-25,in=205] (4.5,-2.65); %bottom curve
    \draw  (4.06,-2.4) arc (0:-180:0.2 and 0.1); %bottom inner curve
    \draw  (4.01,-2.46) arc (0:180:0.15 and 0.07); %top inner curve
    \draw (3.85, -2) arc (90:-90:0.07 and 0.195); %top auxiliary half (solid)
    \draw [dashed] (3.85, -2) arc (90:270:0.07 and 0.195); %top auxiliary curve (dashed)
    \draw (3.85, -2.5) arc (90:-90:0.05 and 0.155); %bottom auxiliary curve (solid)
    \draw [dashed] (3.85, -2.5) arc (90:270:0.05 and 0.155); %bottom auxiliary curve (dashed)
    %Q2
    \draw  (1,-0.6) ellipse (0.08 and 0.25); %leftmost ellipse
    \draw (1,-0.35) to [out=25,in=155] (2.3,-0.35); %top curve
    \draw (2.3,-0.35) arc (90:-90:0.08 and 0.25); %right ellipse half (solid)
    \draw [dashed] (2.3,-0.35) arc (90:270:0.08 and 0.25); %right ellipse half (dashed)
    \draw (1,-0.85) to [out=-25,in=205] (2.3,-0.85); %bottom curve
    \draw  (1.86,-0.6) arc (0:-180:0.2 and 0.1); %bottom inner curve
    \draw  (1.81,-0.66) arc (0:180:0.15 and 0.07); %top inner curve

    \draw  (1.66,-0.63) ellipse (0.3 and 0.2);
%  \draw [dashed] (1.66,-0.63) ellipse (0.4 and 0.3);
 % \draw (1.26, -0.25) -- (1.26, -0.95);
  \draw   [dashed](1.26, -0.25) arc (90:270:0.1 and 0.35);
     \draw (1.26, -0.25) .. controls (2.35,-0.3) and (2.35,-0.95) .. (1.26, -0.95);
%    \draw [<->,thick] (2.5,-1.25) to (3,-1.75);

%    \node at (0, -4) {Arrow  shows $A, B$ curves that intersect};
  \end{tikzpicture}
  \caption{How panel 15 might be resolved in $T$} \label{fig:resolved}
\end{figure}
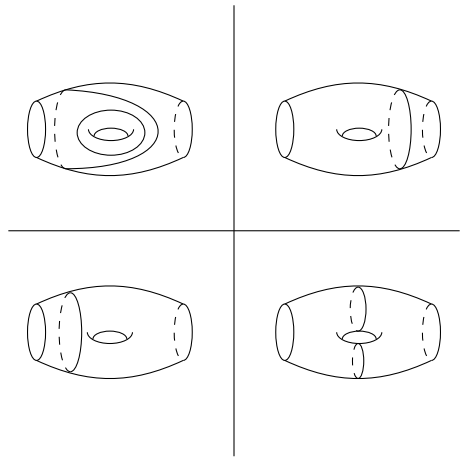
   
\begin{prop} \label{prop:diagonals}
No vertex in the graphic is surrounded by labeling pattern
   $\arraycolsep = 2.0pt
   \begin{array}{c|c}
     A & B \\
     \hline
     B & A
   \end{array}$. 
   \end{prop}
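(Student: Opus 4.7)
The plan is to argue by contradiction using a local monotonicity property for how labels change across a single saddle edge of the graphic. At such a vertex $v$, the surface $T = T_{t_0}$ has two simultaneous saddle tangencies $\rho_+$ and $\rho_-$ with $E = E_{s_0}$, and the four quadrants of the graphic around $v$ correspond to the four pairs of resolutions of these tangencies by small normal perturbations of $T$.

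First I would fix conventions. At each saddle $\rho$, one of the two perturbations of $T$ creates a band joining arcs of $T \cap E$ on the $A$-side of $E$ locally; the other creates the band on the $B$-side. Call these the $A$-resolution and $B$-resolution at $\rho$. Label the four quadrants at $v$ as $Q_{\epsilon_+ \epsilon_-}$ with $\epsilon_\pm \in \{A, B\}$ recording which resolution is chosen at each saddle. Adjacent quadrants differ by changing one $\epsilon$; diagonally opposite quadrants differ in both.

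Next I would prove the key lemma that carries the proof: \emph{Across a single $\rho_\pm$-edge of the graphic, traversing from the $B$-resolution side to the $A$-resolution side, the label can change only as $B \to A$, $A \to A$, or $B \to B$, never as $A \to B$.} The intuition is that passing to the $A$-resolution enlarges the $A$-side connectivity of $E$ near $\rho$: any innermost essential circle bounding an $A$-disk, or any $\bdd$-parallel circle cobounding an $A$-collar, either is disjoint from $\rho$ (so survives verbatim), or has its boundary circle merged/split by the band move but, thanks to the band lying on the $A$-side of $E$, still produces an innermost $A$-disk or $A$-collar in the new configuration.

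Granted the lemma, the contradiction is immediate. In the hypothesized diagonal labeling, walking once around $v$ encounters the cyclic sequence $A, B, A, B$, and the label flips at each of the four edges: twice across $\rho_+$-edges and twice across $\rho_-$-edges. But the two $\rho_+$-edges at $v$ are traversed in opposite directions relative to the $\epsilon_+$-coordinate, so one of them is crossed in the $A$-direction and the other in the $B$-direction. Of the two $B \to A$ flips at the $\rho_+$-edges, one therefore occurs in the forbidden direction: a $B \to A$ flip while traversing from the $A$-resolution side to the $B$-resolution side, i.e., an $A \to B$ flip when read in the $A$-direction. This contradicts the lemma.

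The main obstacle is proving the monotonicity lemma, and the work there is the local analysis cataloged by the 15 panels of Figure \ref{fig:vertexresolved}. For each panel one must check that an $A$-witness circle in the $B$-resolution gives rise to an $A$-witness in the $A$-resolution: the cases split naturally into witnesses disjoint from $\rho$ (trivial), witnesses meeting $\rho$ whose boundary circle is merged or split by the $A$-band (the main content, where one must verify innermost-ness survives), and witnesses of boundary-parallel type. One must also check that the ``hidden'' phenomenon illustrated in Figure \ref{fig:hidden} — where a circle's essentiality in $T$ depends on global structure — is compatible with the monotonicity direction: hidden structure may affect whether a $B \to A$ change is realized or not, but it cannot reverse the permitted direction of label change at a saddle.
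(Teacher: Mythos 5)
Your plan reduces the proposition to a directional \emph{monotonicity lemma} asserting that crossing a single saddle-edge of the graphic from the $B$-resolution side to the $A$-resolution side can never change the label from $A$ to $B$.  This is a genuinely different (and strictly stronger) claim than what the paper proves, and unfortunately it is false; the paper's proof does not assert monotonicity and instead rules out the diagonal pattern by a direct case analysis over the fifteen panels, precisely because single-edge monotonicity fails.

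Here is a counterexample to the monotonicity lemma.  Let $\overline{E}$ be a disk and suppose $T\cap \overline{E}$ consists of a single essential circle $a$ that is $\partial$-parallel in $\overline{E}$, with the collar $C$ between $a$ and $\partial\overline{E}$ lying in $A$, and the disk $D$ bounded by $a$ lying in $B$.  By rule (ii) the label is $A$.  Now suppose the saddle $\rho$ occurs as $a$ pinches toward the inside, so the four sectors at $\rho$ alternate $D,\,C,\,D,\,C$; the two $A$-sectors both lie in $C$ and the two $B$-sectors both lie in $D$.  The $B$-resolution rejoins the two $D$-sectors, giving back $a$ as a single circle and $C$ as a collar; this is the $A$-labeled side.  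The $A$-resolution (the one that, in your words, ``enlarges the $A$-side connectivity of $E$ near $\rho$'') instead joins the two $C$-sectors by a band, which splits $a$ into two disjoint essential circles $a_1,a_2$, splits $D$ into two $B$-disks $D_1,D_2$, and turns the collar $C$ into a pair of pants $C'\subset A$ with boundary $\partial\overline{E}\cup a_1\cup a_2$.  On this side neither $a_i$ is $\partial$-parallel (the region between $a_i$ and $\partial\overline{E}$ is no longer an annulus), neither bounds an $A$-disk, and each bounds a $B$-disk; so the label is $B$.  Traversing from the $B$-resolution side to the $A$-resolution side the label goes $A\to B$, violating your lemma.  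The intuitive justification you offered — that an $A$-collar ``still produces an $A$-collar in the new configuration'' — is exactly what breaks: the band move destroys the collar without creating a replacement $A$-witness.

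The proposition itself is about vertices (two simultaneous saddles), not edges, and this extra rigidity is what makes the forbidden pattern impossible.  The paper's argument exploits this by introducing the notions of $A$-leaf components, $A$-annuli, and hidden components, and by checking panel by panel that no choice of resolutions at the \emph{two} tangencies $\rho_\pm$ can produce diagonally opposite $A$'s; Lemma \ref{lemma:1tangency} is the key reduction (an $A$-witness component meeting only one tangency already forces two adjacent $A$'s), after which Lemmas \ref{lemma:2tangency} and \ref{lemma:annulus} and the hidden-component analysis dispose of the remaining cases.  None of this amounts to, or implies, your edge-monotonicity.  Since your contradiction argument rests entirely on the monotonicity lemma, the proof as proposed does not go through; you would need to abandon the single-edge reduction and work directly with the two-saddle configurations as in the paper.
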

   
\begin{proof}
The simply connected components of $\Ebar - T$ that are shown in Figure \ref{fig:vertexresolved} will each become a disk in some resolution of the tangency points; if the boundary of such a disk is in $\fc_e$, contains no other circles of $\fc_e$ and near its boundary lies in $A$, we will call the the resolve of the disk (which lies entirely in $A$, see Lemma \ref{lemma:hatcher}) an $A$-leaf and the corresponding component of $\Ebar - T$ an $A$-leaf component.  Similarly for a $B$-leaf.   (The terminology is explained in the next section.)    A component of $\Ebar - T$ shown in the diagram is incident either to one of $\rho_{\pm}$ or to both.  

\begin{lemma} \label{lemma:1tangency}  If an $A$-leaf component is incident to only one of $\rho_{\pm}$, then the labeling around the vertex is not  $\arraycolsep = 2.0pt
   \begin{array}{c|c}
     A & B \\
     \hline
     B & A
   \end{array}$
   \end{lemma}
   
   \begin{proof}
Resolve the single tangent point so that the component becomes an $A$-leaf.  Either resolution at the other tangent point (these corresponding to two adjacent quadrants in $\Gamma$) leaves the $A$-leaf intact, so these two adjacent quadrants both get labeled $A$.  
   \end{proof}

\begin{lemma} \label{lemma:2tangency}  At a vertex in $\Gamma$ with surrounding labels 
  $\arraycolsep = 2.0pt
   \begin{array}{c|c}
     A & B \\
     \hline
     B & A
   \end{array}$
   the two $A$ labels cannot both come from $A$-leaf components.  
   \end{lemma}
   
\begin{proof}  Following Lemma \ref{lemma:1tangency} each $A$-label must come from an $A$-leaf component that is incident to both $\rho_{\pm}$.  This eliminates panels 1 through 9.  Moreover, the two $A$-leaf components arise from different resolutions on each tangency point, since they are diagonally opposite.  Only panels 11 and 15 have at least two $2$-vertex components, but in panel 11 they are adjacent and so they can't both lie in $A$.  In panel 15, a resolution of the tangencies that turns an $A$-leaf component into an $A$-leaf, when reversed, only gives disk components that contain points in $B$. 
\end{proof}

This would seem to prove Proposition \ref{prop:diagonals}, until we remember that $A$-labels may arise in another way, as shown in Figure \ref{fig:hidden}. For example in panel 4, the annulus component of $\Ebar - T$ that is shown might lie in $A$, and the interior pair of circles might bound parallel disks in $B$, but when the pair is resolved into a single circle, it is inessential in $\fc$.    Call such a component of $E - T$ in a panel an $A$-annulus.

\begin{lemma} \label{lemma:annulus}  At a vertex in $\Gamma$ with surrounding labels 
  $\arraycolsep = 2.0pt
   \begin{array}{c|c}
     A & B \\
     \hline
     B & A
   \end{array}$,
   neither $A$ label can come from an $A$-annulus.  
   \end{lemma}

\begin{proof} Suppose one of the quadrants gets its $A$-label via an $A$-annulus, as described. Such a component could arise in panels 4, 5, 6, 7, 8 and 9.  In order to have the given labeling 
   the opposite resolution at both $\rho_{\pm}$ should again generate an $A$-label.  The label can't come from the same $A$-annulus since its inner boundary is no longer adjacent to an inessential disk.  Thus the $A$-label must come from an $A$-leaf component and, by observation, each $A$-leaf component is incident to only one of $\rho_{\pm}$.  This contradicts Lemma \ref{lemma:1tangency}
\end{proof}

A final way in which $A$-labels might arise is via `hidden components'.  Remember that the panels only show components of $T \cap E$ that are incident to the tangency points.  Imagine a circle $c$ of $E \cap T$ bounding a disk that contains the pair of components shown in panel 1.  
If both of $\rho_{\pm}$ resolve as in Figure \ref{fig:hidden}, the resulting disk bounded by $c$ could generate a label $A$.  The hidden component here is the `pair of pants' bounded by $c$ and the two components in the panel; it is hidden because $c$ does not appear in the panel.  But it is easy to see that hidden pairs of pants (which could arise in panels 1, 2, and 3) can't possibly give rise to the labeling 
$\arraycolsep = 2.0pt
   \begin{array}{c|c}
     A & B \\
     \hline
     B & A
   \end{array}$.  
   
Hidden annuli require more thought.  Suppose a circle component $c$ of $E \cap T$ cobounds an annulus with a component $X$ from one of the panels.  It may be possible to resolve the tangency points in $X$ so that the end of the annulus at $X$ bounds an inessential circle, so it might in this way be possible for $c$ to generate a label $A$.  By the argument of  Lemma \ref{lemma:1tangency} such a hidden annulus can be part of a labeling
 $\begin{array}{c|c}
     A & B \\
     \hline
     B & A
   \end{array}$
   only if the end at $X$ is incident to both tangency points $\rho_{\pm}$.  This immediately rules out panels 1 through 9 as well as 11 and 14.  The end at $X$ must also have the property that the opposite resolution at both $\rho_{\pm}$ will still give rise to an $A$-disk, and the new $A$-disk must be incident to both $\rho_{\pm}$, by Lemma \ref{lemma:1tangency}.  This eliminates panels 12 and 13.  Panel 10 won't work: each leaf component shown has points in $B$, since the hidden annulus lies in $A$. Finally, these requirements can be fulfilled in panel 15 only if the middle sector lies in $A$ and the two other sectors lie in $B$, and are inessential in $\fc$.  But in that case, there could be no label $B$ in any quadrant.  

A technical note: our labeling convention assigns the label $A$ also if one of the regions in $\Ebar - T$ is a collar of the boundary in an $A$-disk.  The argument in this case is identical to that given above for the case in which there is a hidden circle that completely surrounds the figure in each panel.  
\end{proof}

\section{From weakly aligned to aligned} \label{sect:weakreduce}

In \cite{CG} Casson-Gordon introduced the notion of a weakly reducible Heegaard splitting, rejuvenating Heegaard theory.  They showed that if there are disjoint essential disks in $A$ and $B$, then simultaneous compression on a maximal family of such disjoint disks in $A$ and $B$ will produce either a reducing sphere for $T$ or an incompressible surface or both.  In considering uniqueness, as we are doing here, the choice of a `maximal family' is problematic, since such a family is far from unique.   In this section we avoid this problem of choice, by deriving from the entire pattern of circles $T \cap E$ in $E$ a recipe to move from what we will call a weak alignment of $T$ (in analogy to `weak reduction') to a full alignment of $T$, in a series of steps that is well-defined up to congruence.  

Suppose $E$ is a disk/sphere set in $M = A \cup_T B$.  $E$ will be called in {\em weak alignment} with $T$ if, among the components of $E - T$, there are both $A$-leaves and $B$-leaves.  Continuing under Assumption \ref{ass:induct}, we will describe a natural algorithm that transforms a weak alignment of $E$ into a full alignment, an algorithm that is well-defined up to congruence.  

Denote by $\Da$ (resp $\Db$) the collection of all $A$-leaves (resp $B$-leaves) in $E$ coming from $E \cap T$.  Figure \ref{fig:Eview} illustrates the idea in a disk component of $E$ whose boundary lies in $\bdd_- B$.  

\begin{figure}[ht!]
\labellist
\small\hair 2pt
\pinlabel  $\Da$ at 110 35
%\pinlabel  $\gamma$ at 155 133
\pinlabel  $\Db$ at 175 150
\endlabellist
    \centering
    \includegraphics[scale=0.6]{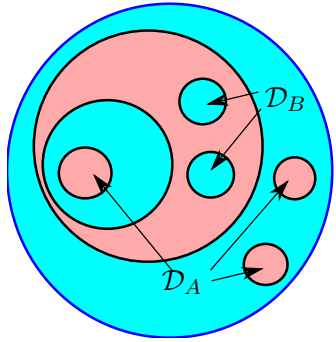}
\caption{The view in $E$} 
 \label{fig:Eview}
    \end{figure}

Consider the surface $T_{c(ompressed)} \subset M$ obtained by compressing $T$ along $\mathcal{D}_A \cup \mathcal{D}_B $.  $T_c$ divides $M$ into two (each possibly disconnected) 3-manifolds $M_A$ and $M_B$.  Imagine thickening $T_c$ by expanding it into a bi-collar as shown in Figure \ref{fig:MAMB}.  This would induce Heegaard splitting surfaces $T_A \subset M_A$, obtained from the original $T$ by compressing only along $\mathcal{D}_A$ and then pushing  towards the $A$-side of the bicollar.  The symmetric construction gives a Heegaard splitting surface $T_B$ in $M_B$.
%T_B$, so that $$M = M_A \cup_{T_c} M_B$.  

\begin{figure}[ht!]
\labellist
\small\hair 2pt
\pinlabel  $\Da$ at 120 115
\pinlabel  $T$ at 135 200
\pinlabel  $\Db$ at 70 140
\pinlabel  $A$ at 30 115
\pinlabel  $B$ at 150 115
\pinlabel  $M_A$ at 200 140
\pinlabel  $T_A$ at 200 215
\pinlabel  $M_B$ at 285 110
\pinlabel  $T_B$ at 285 200
\pinlabel  $T_c$ at 245 200
%\pinlabel  $M_A$ at 70 140
\endlabellist
    \centering
    \includegraphics[scale=0.6]{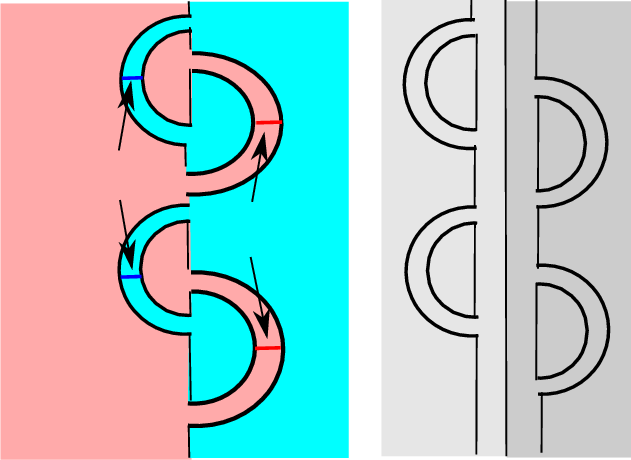}
\caption{The view in $M$, and a mental image} 
 \label{fig:MAMB}
    \end{figure}

Both $(M_A, T_A)$ and $(M_B, T_B)$ have lower complexity than $(M, T)$, so our inductive Assumption \ref{ass:induct} applies.  In particular, given any disk/sphere set in $M_A$, the surface $T_A$ can be isotoped, uniquely up to congruence, so that it aligns with the disk/sphere set (and similarly for $(M_B, T_B)$).  Such an isotopy of $T_A$ can be described \cite{Sc} as a sequence of handle-slides of and over the handles whose cocores are the $\Db$ disks.  But these same handle-slides could have been done on and over the handles as they actually lie on $T_c$, avoiding (by general position) the attaching disks for the handles on the other side, those with cocores the disks $\Da$.  In thinking of this as an isotopy of the original Heegaard surface $T$, the exact trajectory which the handle-slides follow across $T_c$ in order to avoid the disks $\Db$ is, for our purposes, unimportant: one choice can be moved to another by eyeglass twists of $T$.  The symmetric argument applies to $M_B$.  The upshot is:

\begin{prop} \label{prop:handleslide}
Suppose $E_A \subset M_A$ and $E_B \subset M_B$ are disjoint disk/sphere sets. %whose boundaries lie on $T_c \subset M$.   
Then there is an isotopy of $T$, keeping $T_c$ set-wise fixed, to a position in which the boundary of each disk in $E_A \cup E_B$ remains unchanged in $T_c$ and $T$ is disjoint from the interior of each disk in $E_A \cup E_B$.  The isotopy of $T$ is well-defined up to congruence.  \qed
\end{prop}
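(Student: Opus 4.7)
The plan is to reduce the problem to the two lower-genus Heegaard splittings $(M_A, T_A)$ and $(M_B, T_B)$, apply the inductive hypothesis to each, and then argue that the resulting isotopy can be lifted to an isotopy of $T$ in $M$ whose ambiguity in the lift is precisely absorbed by eyeglass twists.

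First I would apply Assumption \ref{ass:induct} to $(M_A, T_A)$ with the $M_A$-reducing system $E_A$, obtaining an isotopy of $T_A$ in $M_A$ to a position in which $E_A$ becomes $T_A$-reducing, well-defined up to congruence. Symmetrically for $(M_B, T_B)$ and $E_B$. Next, following the argument of \cite{Sc} referenced in the preceding paragraph, I would rewrite the $T_A$-isotopy as a finite sequence of edge-slides on the spine of the compression-body side of $T_A$ whose cocore disks are the members of $\Da$; equivalently, a sequence of handle-slides of the 1-handles of $M$ whose cocores form $\Da$, over one another and over $T_c$, all taking place on the $A$-side of $T_c$. Symmetrically for $T_B$ using $\Db$-handles on the $B$-side.

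Then I would lift these two independent sequences of handle-slides to an actual isotopy of $T$ in $M$, keeping $T_c$ setwise fixed. The only new issue, compared with the abstract handle-slides on $T_A$ and $T_B$, is that on $T_c$ the attaching feet of the $\Da$-handles must be slid without passing through the attaching feet of the $\Db$-handles, since otherwise the $A$-side handle-slides would non-trivially alter $T$ from the $B$-side and vice versa. Since $T_c$ is a $2$-surface and the $\Db$-feet form a $0$-dimensional transverse obstruction on $T_c$, general position lets us choose sliding trajectories that avoid them, so a lift exists.

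The main obstacle is showing that the lift is well-defined up to congruence, and this is where eyeglass twists enter. Any two valid trajectories for a $\Da$-foot differ by how they wind around the $\Db$-feet on $T_c$; such a difference is realized by an eyeglass twist whose disks $D_A, D_B$ are (parallel copies of) the relevant members of $\Da$ and $\Db$ and whose bridge $\gamma$ is an arc on $T$ lying in the bi-collar of $T_c$. A finite sequence of such elementary trajectory changes relates any two lifts, so the lifts differ by a sequence of eyeglass twists. Combined with the congruence-uniqueness from Assumption \ref{ass:induct} applied to each of $T_A$ and $T_B$, this gives that the resulting isotopy of $T$ is well-defined up to congruence, completing the proof.
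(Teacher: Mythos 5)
Your proposal follows the same route as the paper: invoke the inductive hypothesis on $(M_A,T_A)$ and $(M_B,T_B)$, realize each isotopy as handle-slides over the $\Da$- (resp.\ $\Db$-) handles via \cite{Sc}, lift to an isotopy of $T$ fixing $T_c$ setwise by choosing trajectories on $T_c$ that miss the opposite feet, and absorb the ambiguity in those trajectories by eyeglass twists. The only minor imprecision is calling the $\Db$-feet a ``$0$-dimensional obstruction''---they are disks, not points---but the avoidance is still available since the sliding arcs can be pushed off a finite union of disjoint disks, which is exactly the general-position argument the paper uses.
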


Consider a component $P$ of $E - \fc_e$ which is next to innermost, i.e. all but one of its boundary components is an innermost circle in $\fc_e$, so each of these components of $\bdd P$ bounds a disk in $\Da$ (or each bounds a disk in $\Db$), the resolve of the disk it bounds in $E$.  Then the exceptional boundary component $\bdd_0 P$ lies in $T_c$ and bounds a disk $D_P$ in $M_B$ (or $M_A$), through which the $1$-handles dual to $\Da$ (or $\Db$) may pass.  
\medskip

The algorithm is then:  
\begin{enumerate}
\item  Apply Proposition \ref{prop:handleslide} to the collection of all such components $P$ of $E - T$, isotoping $T$ without changing $T_c$ so that afterwards the interior of each disk $D_P$ is disjoint from $T$.
\item Add each such disk $D_p$ to $\Db$ or $\Da$ as appropriate, compressing $T_c$ to $T'_c$
\item Repeat the process until at least one component $\overline{E}$ of $E$ is aligned with $T$ (as explained below).
\item The output is the alignment generated by $\overline{E}$.
\end{enumerate}

It will be important for its application that the algorithm is robust: a minimal change in input information will result in the same output.  To understand more fully how the algorithm operates, we can describe it schematically.

 The pattern in $E$ of $\fc_e$ defines a tree in each component $\Ebar$ of $E$, with a vertex for each component of $\Ebar - \fc_e$ and an edge connecting two such components if there is a single circle of $\fc_e$ between them.  The tree has a natural base or root when $\Ebar$ is a disk, namely the component of $\Ebar - \fc_e$ containing the boundary.  Let $Y$ denote the forest that is the whole collection of trees.  The innermost disks of $E - \fc_e$ can be thought of as leaves in the forest $Y$. One measure of the complexity of each tree is the diameter of the tree, when $\Ebar$ is a sphere, or the height of the tree when $\Ebar$ is a disk.  (Tree height is the edge-distance from the root of the tree to the most distant leaf.  See Figure \ref{fig:Tree1}). 

\begin{figure}[ht!]
\labellist
\small\hair 2pt
\pinlabel  $0$ at 550 60
\pinlabel  $4$ at 315 180
\pinlabel  $2$ at 360 220
%\pinlabel  $M_A$ at 70 140
\endlabellist
    \centering
    \includegraphics[scale=0.5]{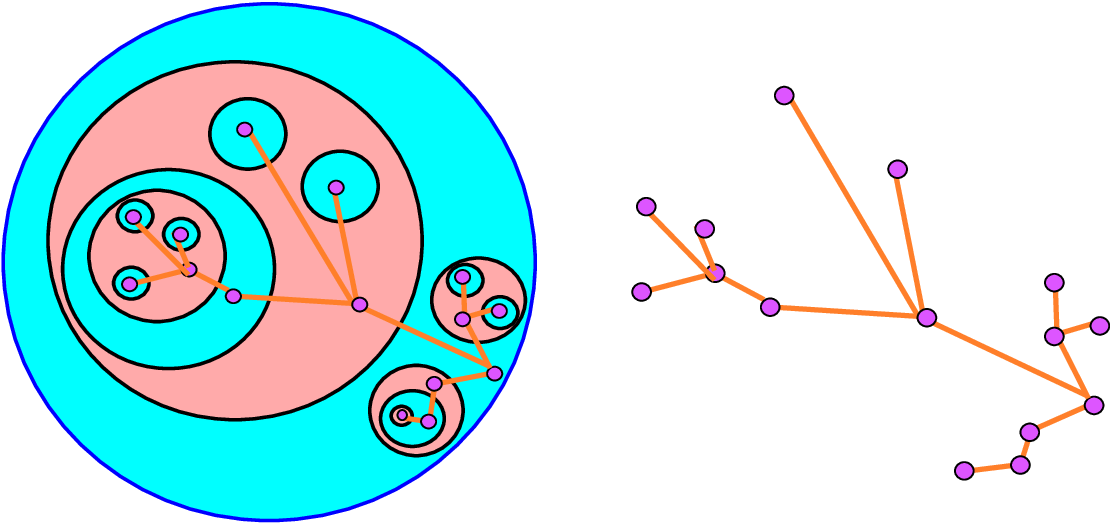}
\caption{Tree height is 4} 
 \label{fig:Tree1}
    \end{figure}

The $B$-leaves of $Y$ correspond to $\mathcal{D}_B$, and $A$-leaves to $\mathcal{D}_A$.  
The {\em branch-structure} $Y_c$ of $Y$ is obtained from $Y$ by removing all leaves; alternatively, it is the forest determined by the circles $T_c \cap E$ in $E$.  

\begin{figure}[ht!]
\labellist
\small\hair 2pt
\pinlabel  $Y_c$ at 400 60
\endlabellist
    \centering
    \includegraphics[scale=0.5]{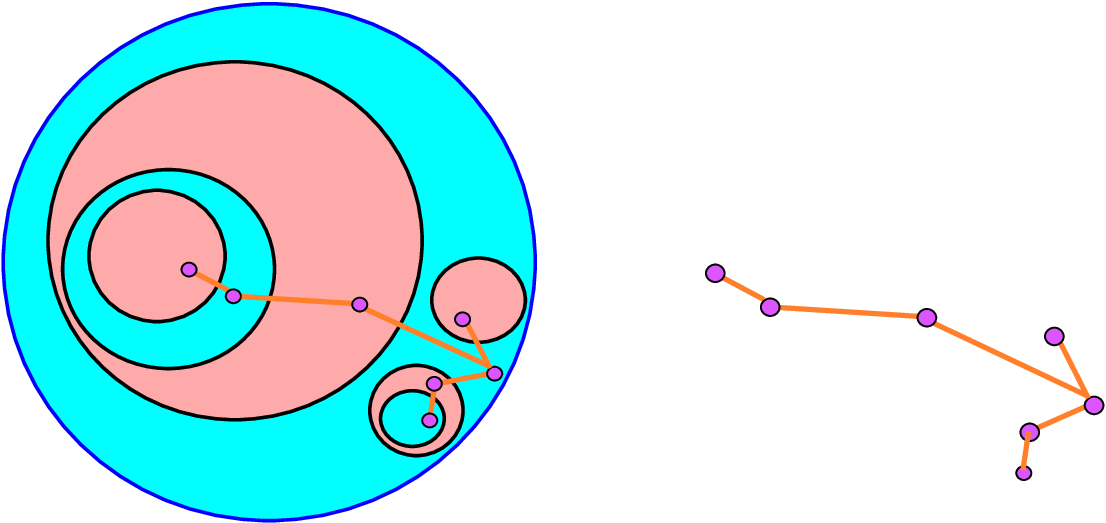}
\caption{Branch structure from $T_c \cap E$} 
 \label{fig:Branch}
    \end{figure}
    
    The leaves of the branch structure correspond to the ``second-innermost" circles in the algorithm described above or, in terms of the original forest, they are the vertices that become leaves when the original leaves are removed. %, often outermost forks in the original forest.   
Applying the algorithm described above replaces the original leaves with these new leaves.  Since we have no control over how the $1$-handles of $T_A \subset M_A$ and $T_B \subset M_B$ intersect the non-disk components of $E - T_c$, leaves may also sprout from every other vertex in $Y_c$.  But one iteration of the algorithm described will decrease the height (or diameter) of each tree.  This is shown schematically in Figure \ref{fig:Step2b}, where new leaves sprout in non-disk components of $E - T_c$.    

\begin{figure}[ht!]
\labellist
\small\hair 2pt
%\pinlabel  {Some } at 350 80
\pinlabel  {New} at 350 80
\pinlabel  {sprouting} at 350 60
\endlabellist
    \centering
    \includegraphics[scale=0.6]{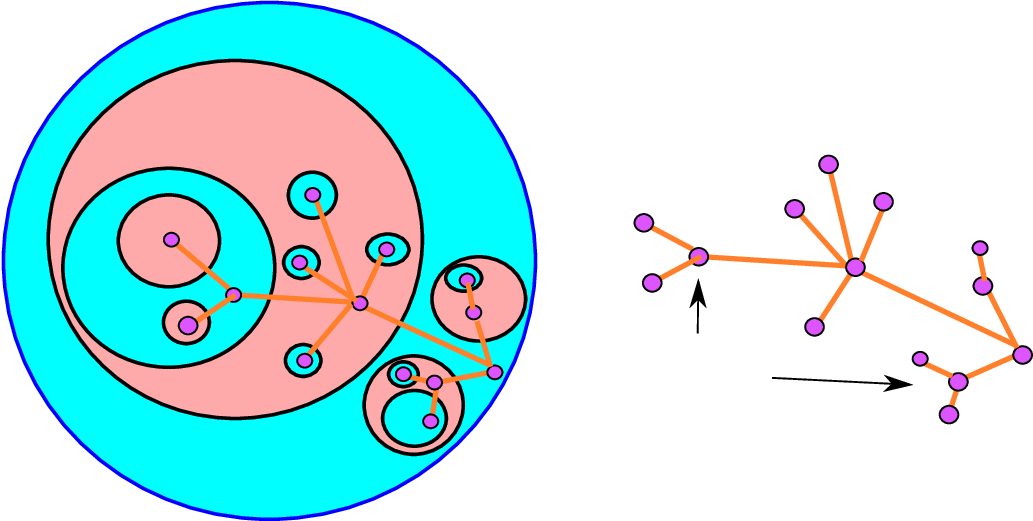}
\caption{De/refoliation of $B$-leaves; height is now 3} 
 \label{fig:Step2b}
    \end{figure}

 Since the complexity of $(M_A, T_A)$ is less than the complexity of $(M, A)$ inductive Assumption \ref{ass:induct} says that the new $A$-leaves of $Y'$ (the ones corresponding to the leaves of the branch structure) are well-defined in $T_A$ up to congruence, so they are similarly well-defined in $T$.  Add them to $\mathcal{D}_A$, compressing $T_c$ into $A$, typically $\bdd$-reducing $T_A \subset M_A$. See Figure \ref{fig:MakeY2}. Call the augmented collection $\mathcal{D'}_A$.

\begin{figure}[ht!]
\labellist
\small\hair 2pt
\pinlabel  $\Da$ at 120 115
\pinlabel  $T$ at 135 200
\pinlabel  $\Da'$ at 360 120
\pinlabel  $A$ at 30 115
\pinlabel  $B$ at 150 115
%\pinlabel  $A$ at 260 115
%\pinlabel  $B$ at 385 115
%\pinlabel  $M_A$ at 70 140
\endlabellist

    \centering
    \includegraphics[scale=0.5]{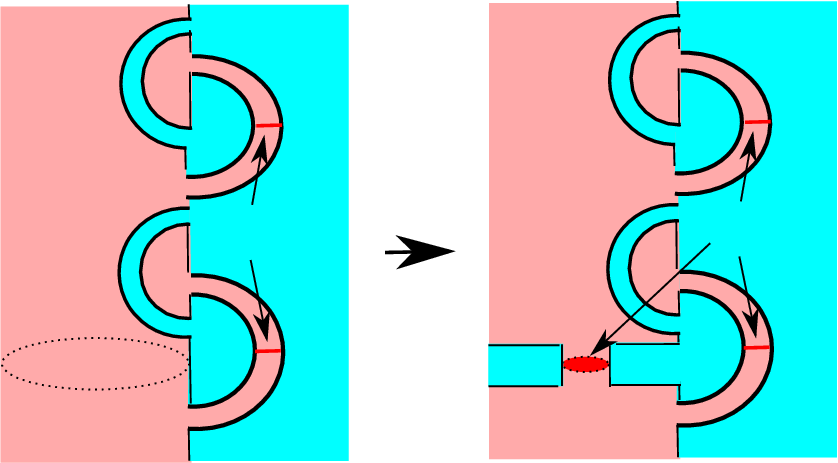}
\caption{New $A$-leaves added to $\mathcal{D}_A$ } 
 \label{fig:MakeY2}
    \end{figure}
     
A symmetric argument applies in $M_B$, resulting in new surfaces $T'_A, T'_B$ and $T'_c$,  the latter dividing $M$ now into $M'_A, M'_B$. 

Continue with the algorithm until the height or diameter in some component $\overline{E}$ is $1$. We note in detail the last step:  $T$ now divides $\overline{E}$ into a planar surface in $B$, say, (which is incident to  $\bdd \overline{E}$ if $\overline{E}$ is a disk) and a collection of disks in $A$, all of $\overline{E}$ now lying in the Heegaard split proper submanifold $M_B$ of $M$ with lower complexity than $(M, T)$.  Once again apply the Strong Haken Theorem \cite{Sc} together with the inductive hypothesis on this lower complexity splitting to isotope $T$ so that $\overline{E}$ is aligned, uniquely up to congruence.  This completes the algorithm.

\section{Appendages and inherited alignments} \label{sect:appendage}

Section \ref{sect:weakreduce} described an algorithm which proceeds by well-defined iteration from a weak alignment of $T$ with a disk/sphere set $E$ into a full alignment, at each step increasing the number of disjoint weakly reducing disks.  We picture those disks originally corresponding to outermost leaves moved via handleslides to make way for new disks corresponding to outermost leaves of the branch structure.  Suppose $E'$ is a disk/sphere set contained in the weakly aligned $E$ and $E'$ is itself weakly aligned.  In this section we show that the algorithm applied to $E'$ gives an alignment of $E'$ that is congruent to the alignment of $E'$ inherited from the alignment of $E$.  

We begin with this easy corollary of our inductive assumption:

\begin{cor}\label{cor:appendage}  Let $E_0, E_1$ be properly isotopic disk/sphere sets in $M$ that are congruently aligned with $T$.  Suppose that a disk $(D, \bdd D) \subset (M, \bdd M)$ is disjoint from $E_0$ and $E_1$ and is also aligned with $T$.  Then $E_0$ and $E_1$ are congruent using only eyeglass twists and bubble moves that are disjoint from $D$.
\end{cor}

\begin{proof} The Corollary follows from Assumption \ref{ass:induct}, which allows us to apply Theorem \ref{thm:main} to the lower-genus Heegaard split manifold $(M', T')$ obtained from $(M, T)$ by surgery along $D$. 
\end{proof}

%*****************
%
%\begin{defin} \label{defin:converge}  Two weak alignments of $T$ with $E$ {\em converge} if the algorithm results in congruent full alignments.
%\end{defin}
%
%The term is meant to convey that, after perhaps some iterations in the algorithm, the two weak alignments may become indistinguishable, even before they each become fully aligned.  
%
%An important example of convergent weak alignments begins with this easy corollary of our inductive assumption:
%
%\begin{cor}\label{cor:appendage}  With hypotheses as in Theorem \ref{thm:main},  suppose that a disk $(D, \bdd D) \subset (M, \bdd M)$ is disjoint from the two disk/sphere sets $ E_i$ and is aligned with $T$.  Then $E_0$ and $E_1$ are congruent using only eyeglass twists and bubble moves that are disjoint from $D$.
%\end{cor}
%
%\begin{proof} The Corollary follows from Assumption \ref{ass:induct}, which allows us to apply Theorem \ref{thm:main} to the lower-genus Heegaard split manifold $(M', T')$ obtained from $(M, T)$ by surgery along $D$. 
%\end{proof} 

Suppose $E$ is a disk/sphere set weakly aligned with $T$, and $\Da \subset M_A$ and $\Db \subset M_B$ are the leaves, as described above.  

\begin{prop} \label{prop:appendage}
Suppose $(D, \bdd D) \subset (M_B, \bdd M_B)$ (resp  $(M_A, \bdd M_A))$ is a properly embedded essential disk that is disjoint from $E$.  Then the alignment of $T$ with $E$ given by the algorithm is unaffected (up to congruence) by adding $D$ to $\Db$ (resp $\Da$) at the start.
\end{prop}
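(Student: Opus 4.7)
The plan is to show that the two algorithm runs---one starting with $\Db$ and one with $\Db \cup \{D\}$---produce congruent $T$-reducing systems, by reducing the congruence question to Corollary \ref{cor:appendage}. Since $D$ is disjoint from $E$, the intersection pattern $T \cap E$ and the associated forest $Y$ described in Section \ref{sect:weakreduce} are unchanged by including $D$, so both algorithm runs identify the same next-to-innermost components $P \subset E - T$ and the same compressing disks $D_P$ at each iteration. The only difference is that in the second run the surface $T_c$ is additionally compressed along $D$ from the outset, enlarging the ambient compression body by the $1$-handle dual to $D$ without altering the $D_P$'s.

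I would then invoke Proposition \ref{prop:handleslide} iteratively in both runs. The handle-slide isotopies that make the $D_P$'s disjoint from $T$ are well-defined up to congruence. The presence of the additional compression along $D$ in the second run simply means those isotopies are carried out in a compression body with one extra handle. By general position, handle-slide trajectories can be arranged to avoid $D$, and any alternative trajectory crossing $D$ differs from one avoiding $D$ by an eyeglass twist whose eyeglass is disjoint from $E$ (and in particular disjoint from the $D_P$'s). Thus at every stage, the image of $T$ produced by the second run agrees, up to congruence rel $D$, with that produced by the first.

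Both runs terminate when some component $\overline{E} \subset E$ becomes a $T$-reducer; the output is then the solution generated by $\overline{E}$ via Lemma \ref{lemma:generate}. In the second run, $D$ also ends up disjoint from the final image of $T$, so $D$ is a $\bdd$-reducing disk for the resulting Heegaard surface. Applying Corollary \ref{cor:appendage} with the collection $\{D\}$ playing the role of $\mathcal{D}$ then yields a congruence between the two solutions via eyeglass twists and bubble moves disjoint from $D$, completing the proof. The principal obstacle is the second step: justifying that the algorithm's handle-slide isotopies can be made to respect $D$ up to congruence. This relies on iterating the argument of Corollary \ref{cor:appendage} at each stage to absorb any $D$-crossing trajectories into eyeglass twists whose eyeglasses lie in the complement of $E$, so that the extra $D$-compression is an inert bystander to the whole algorithm.
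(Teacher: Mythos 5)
Your overall strategy is in the right spirit, and your opening observation---that $D$ being disjoint from $E$ means the forest $Y$, the next-to-innermost components $P$, and the compressing disks $D_P$ are identical in both runs---matches the paper's reasoning. The handle-slide-avoidance claim also captures part of the paper's argument. However, there is a genuine gap in the final step.

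You assert that ``$D$ also ends up disjoint from the final image of $T$, so $D$ is a $\bdd$-reducing disk for the resulting Heegaard surface,'' and then apply Corollary \ref{cor:appendage} with $\mathcal{D}=\{D\}$. But $\bdd D$ lies on $T_c$, not on $T$, so $D$ is not a $\bdd$-reducing disk for $T$; being disjoint from a Heegaard surface is not the same as having boundary on it. Moreover, Corollary \ref{cor:appendage} does not \emph{produce} a congruence between two $T$-reducing systems---it asserts that an already-existing congruence (obtained from the inductive hypothesis in a lower-genus splitting) can be chosen disjoint from $\mathcal{D}$. Invoking it at the ambient genus $genus(T)$ would be circular.

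The paper instead splits the analysis of ``adding $D$ to $\Db$'' into its effects on each of the two pieces $M_A$ and $M_B$, and this split is what makes the argument close up. On the $M_B$ side, $D$ genuinely \emph{is} a $\bdd$-reducing disk for the lower-genus Heegaard splitting of $M_B$ that is disjoint from $E$, so Corollary \ref{cor:appendage} applies there at a genus where Assumption \ref{ass:induct} is in force. On the $M_A$ side, the effect of adding $D$ is to promote the $1$-handle dual to $D$ from boundary of $M_A$ to a genuine $1$-handle in the splitting of (the enlarged) $M_A$ by $T_A$; the key point is that since $D$ is disjoint from $E$, the algorithm never needs to move this new handle, so it sits inert throughout. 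Your paragraph about trajectories ``carried out in a compression body with one extra handle'' is aiming at this $M_A$-side observation but is never paired with the complementary $M_B$-side application of Corollary \ref{cor:appendage}. Reorganizing the argument along the $M_A$/$M_B$ split, and dropping the final global application of the corollary, would close the gap.
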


\begin{proof}  We will show that in both $M_A$ and $M_B$ the algorithm is unaffected by the addition of $D$ to $\Db$.  (Of course if $D$ is parallel in $B$ to an element in $\Db$ there is nothing to show.)

The submanifold $M_B$ is obtained by compressing $B$ along the disks $\Db$; we can think of $D$ as an aligned disk in $(M_B, T_B)$.  Apply Corollary \ref{cor:appendage} to $D$ in $M_B$, observing then that the algorithm can proceed, whenever it involves handle-slides, bubble moves and eyeglass twists in $M_B$, just as it would if we had first compressed $M_B$ along $D$.  

In $M_A$ the addition of $D$ to $\Db$ changes the status of $\bdd D$ from just being part of the boundary of $M_A$ to being the belt curve of a $1$-handle in the splitting of $M_A$ by $T_A$.  This is a profound change, since the original algorithm may require passing $1$-handles past $\bdd D$.  But the passing of a $1$-handle past $\bdd D$ can be mimicked, after adding $D$ to $\Db$, by passing the $1$-handle over the new $1$-handle dual to $D$.  Moreover, since $D$ is disjoint from $E$, the algorithm never requires the new $1$-handle to be slid at all.  And so the algorithm can proceed step after step, never moving the new $1$-handle, until the alignment is achieved.
\end{proof}

Because of its inactivity in the proof:
\begin{defin} A disk $D$ as in Proposition \ref{prop:appendage} is called an {\em appendage disk}.
\end{defin}

\begin{lemma} \label{lemma:redundant} Suppose $E'$ is a disk/sphere set contained in the weakly aligned disk/sphere set $E$ and $E'$ is itself weakly aligned.  Then the alignment of $E'$ provided by the algorithm applied to $E'$ is congruent to the alignment inherited by $E'$ from the algorithm applied to $E$. 
%(That is, the two weak alignments converge on $E'$.)
\end{lemma}

\begin{proof} The case is in which $E$ consists of only two components $E = \Ebar_{\pm}$, with $E' = \Ebar_+$ is definitive; the general case is no harder.  At the beginning of the algorithm on $E$, the submanifolds $M_A$ and $M_B$ are defined by compressing $T$ along the disk components of $E - \fc_e$.  Now remove $\Ebar_-$ and note that the algorithm applied to $\Ebar_+$ would call for compressing only along the disk components of $\Ebar_+ - \fc_e$.  But the outcome of that algorithm is unaffected by further compressing by disk components on $\Ebar_- - \fc_e$, by Proposition \ref{prop:appendage}.  So, at the initial stage, there is no difference between the eventual alignments.  Just continue in this manner, using how the algorithm behaves on the `virtual' component $\Ebar_-$ to present extra disks to be included as appendages (under Proposition \ref{prop:appendage}) as the algorithm is applied to $\Ebar_+$ alone.  

Eventually the parallel algorithms stop, when one of $\Ebar_{\pm}$ is aligned.  If it stops because $\Ebar_{+}$ is aligned, then we have shown that the alignments from $E$ and from $\Ebar_+$ result in the same alignment of $\Ebar_+$, as required.  If it stops because $\Ebar_{-}$ is aligned, then the algorithm for $E$ declares that the alignment consists of $\Ebar_{-}$, together with {\em any} alignment for $\Ebar_{+}$ in the manifold obtained by surgering %reducing (or $\bdd$-reducing)
 $(M, T)$ along $\Ebar_{-}$.  An example of such an alignment is given by the output of the algorithm further played out on $\Ebar_+$.
\end{proof}

Suppose $E$ is a weakly aligned disk/sphere set containing two components $\Ebar_+, \Ebar_-$ that are parallel to each other in $M$, with no component of $E$ between them.  Let $E_+ = E - \Ebar_- \subset E$ and $E_- = E - \Ebar_+ \subset E$.  Note that $E_+, E_-$ are themselves isotopic disk/sphere sets in $M$.  Use the algorithm to align $E$.

\begin{prop} \label{prop:parallel} The alignments of $E_{\pm}$ inherited from the alignment of $E$ are congruent.
\end{prop}

Note that we are not assuming either of $E_{\pm}$ is originally weakly aligned, though by Lemma \ref{lemma:redundant} if either is, the alignment given by the algorithm is congruent to that inherited from the alignment of $E$.

\begin{proof}  After $E$ is aligned, consider the product region between $\Ebar_{\pm}$ (either $B^2 \times I$ or $S^2 \times I$). Since $E$ is aligned, $T$ may intersect each end of this product region in at most one circle. %We may as well assume there are no other components of $E$ in this product region.   
If $T$ is disjoint from both ends, then it is disjoint from the product region, so the inherited alignments on the components  $\Ebar_{\pm}$ are isotopic. 

If $T$ intersects only one end, say $\Ebar_+$ then it follows from Waldhausen's theorem \cite{Wa} that $T$ intersects the product region in a collection of bubbles; the inherited alignments on the components  $\Ebar_{\pm}$ then differ by bubble moves, so they are congruent.  If $T$ intersects both ends, a similar argument shows that, after some bubble moves, $T$ will intersect the product region in a simple annulus, so the alignments are again congruent.  
\end{proof}

\section{Return to the graphic} \label{sect:graphicreturn}

%Suppose an edge in the graphic lies between a region labeled $A$ and a region labeled $B$.  The edge indicates a saddle tangency of $E$ with $T$.  Let $\overline{E}$ be the component of $E$ that contains the saddle tangency.  Let $\Ebar_A, \Ebar_B$ be slight push-offs of $\Ebar$ corresponding to the regions labeled $A$ and $B$ respectively, and $E_{\pm}$ be the weakly aligned disk/sphere set obtained from $E$ by deleting $\Ebar$ and replacing with $\Ebar_A \cup \Ebar_B$.  The algorithm applied to $E_{\pm}$ aligns $E_{\pm}$ and, by Proposition \ref{prop:parallel}, the inherited alignments on the two positionings of $E$ corresponding to the adjacent regions are congruent.  
%
%\begin{defin} This congruence class of alignments on $E$ is called the {\em border edge alignment}.  \end{defin}
%
%Note again that  by Lemma \ref{lemma:redundant} the border edge alignment is congruent to that given by the algorithm applied in the adjacent regions, if those adjacent regions correspond to a weak alignment.  

Return now to the proof of Theorem \ref{thm:main} by examining the graphic more closely, inspired by \cite[Subsection 4.5]{FS} and adopting similar conventions.  An edge in the graphic that lies between a region labeled $A$ and a region labeled $B$ will be called a {\em border edge}.  Following Section \ref{sect:forbidden}, any vertex in the graphic that is incident to a border edge is incident to exactly two border edges (or to the boundary of the graphic).  Thus the collection of border edges constitute a properly embedded $1$-manifold in the graphic, one that separates $A$ regions from $B$ regions.  

We have shown earlier that three sides of the graphic ($s = 0, 1$ and $t = 0$) are adjacent to $A$-regions.  Since the union of the three sides is connected, there is a single component $\mathcal{A}$ of the complement of the border edges that contains all three sides in its boundary. $\mathcal{A}$ consists entirely of regions labeled $A$.  (See Figure \ref{fig:graphicsketch}.)

We focus on the $1$-manifold component $C$ of $\bdd \mathcal{A}$  that contains the three sides $s = 0, 1$ and $t = 0$.  If $C$ contains the fourth side $t=1$ then per Proposition \ref{prop:side1} we are done, so our interest will focus on the arc in $C$ whose ends are at the corners $s \in \{0, 1\}, t = 1$ of the graphic, or more specifically, the border arcs that $C$ contains.    (See Figure \ref{fig:graphicsketch}).  

%
%\begin{figure}[ht] \label{fig:favorA}
%  \centering
%  \begin{tikzpicture}[scale=0.7]
%    \draw (0, 0) -- (4, 4);
%    \draw (0, 4) -- (4, 0);
%    \node at (2, 0.75) {\large A};
%    \node at (0.5, 2) {\large B};
%    \node at (3.5, 2) {\large B};
%    \node at (2, 3.25) {\large A};
%
%    \node at (6, 2) {\Huge $\longrightarrow$};
%    \node at (5.93, 2.35) {\small render};
%
%    \draw (8, 0) -- (9.4, 1.4);
%    \draw (8, 4) -- (9.4, 2.6);
%    \draw (9.4, 1.4) to [out = 45, in = 315] (9.4, 2.6);
%    \draw (12, 0) -- (10.6, 1.4);
%    \draw (12, 4) -- (10.6, 2.6);
%    \draw (10.6, 1.4) to [out = 135, in = 225] (10.6, 2.6);
%    \node at (10, 0.75) {\large A};
%    \node at (8.5, 2) {\large B};
%    \node at (11.5, 2) {\large B};
%    \node at (10, 3.25) {\large A};
%  \end{tikzpicture}
%  \caption{}
%\end{figure}
%
%\underln{Artistic convention}: If labels $A$ and $B$ alternate around a saddle wall double point (see Figure \ref{fig:favorA}) render the boundary between $A$-regions and $B$-regions to be a 1-manifold favoring $A$. 

%The resulting 1-manifold separates $A$ regions from $B$ regions;

\begin{figure}[ht!]
\labellist
\small\hair 2pt
\pinlabel  {s} at 45 20
\pinlabel  {t} at 70 05
\pinlabel  {$\mathcal{A}$} at 90 30
\pinlabel  {B} at 180 100
\pinlabel  {B} at 115 60
\pinlabel  {A} at 115 80
\pinlabel  {C} at 165 120
\endlabellist
    \centering
    \includegraphics[scale=1.0]{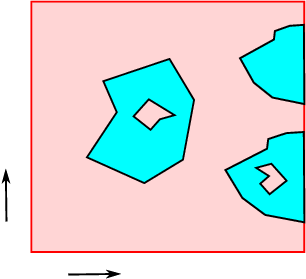}
\caption{Graphic labels: red = A, blue = B} 
 \label{fig:graphicsketch}
    \end{figure}
    
A border edge indicates a saddle tangency of $E$ with $T$.  Let $\overline{E}$ be the component of $E$ that contains the saddle tangency.  Let $\Ebar_A, \Ebar_B$ be slight push-offs of $\Ebar$ corresponding to the regions labeled $A$ and $B$ respectively, and $E_{\pm}$ be the weakly aligned disk/sphere set obtained from $E$ by deleting $\Ebar$ and replacing with $\Ebar_A \cup \Ebar_B$.  The algorithm applied to $E_{\pm}$ aligns $E_{\pm}$ and, by Proposition \ref{prop:parallel}, the inherited alignments on the two positionings of $E$ corresponding to the adjacent regions are congruent.  

\begin{defin} This congruence class of alignments on $E$ is called the {\em border edge alignment}.  \end{defin}

Note again that  by Lemma \ref{lemma:redundant} the border edge alignment is congruent to that given by the algorithm applied in the adjacent regions, if those adjacent regions correspond to a weak alignment.

It follows that the lowest border edge (i. e. minimal $s$) on the lowest border arc in $C$ has border edge alignment congruent to $E_0$ and the highest border edge on the highest border arc of $C$ has border edge alignment congruent to $E_1$.  If we can show that the border edge alignments given by successive border edges in $C$ are always congruent, then we will have shown that the solutions $E_0$ and $E_1$ are congruent, as required.  So we examine how passing through a vertex of the graphic that lies on a border arc affects the border edge alignments of the incident edges.  We will show the following, from which Theorem \ref{thm:main} then follows.

\begin{prop} \label{prop:vertex}
At any vertex in a border arc, the border edge alignments given by the incident border edges are congruent.
\end{prop}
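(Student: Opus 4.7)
The plan is to introduce a common ``vertex-level'' weak solution $E^+$ that dominates both incident edge weak solutions, and then invoke Lemma~\ref{lemma:redundant} to show that $E_{e_1}$ and $E_{e_2}$ each converge with $E^+$, hence with each other.

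The vertex $v$ corresponds to two simultaneous saddle tangencies $\rho_\pm$ of $T$ with a component $\Ebar$ of $E$, and the four quadrants around $v$ correspond to the four combined resolutions of $\rho_\pm$. For each quadrant $Q$ I would select an isotopic perturbation $\Ebar_Q$ of $\Ebar$, supported in disjoint neighborhoods of $\rho_\pm$, whose intersection pattern with $T$ matches that of $Q$. By stacking parallel copies of $\Ebar$ at distinct levels of a bicollar and performing the required local perturbation independently near each tangency, the four $\Ebar_Q$ can be arranged mutually disjoint and disjoint from $E - \Ebar$. Set
$$E^+ = (E - \Ebar) \cup \bigcup_{Q} \Ebar_Q.$$
Since $v$ lies on a border arc, at least one adjacent quadrant is labeled $A$ and at least one labeled $B$, so $E^+ - T$ contains both an $A$-reducing and a $B$-reducing disk; thus $E^+$ is a weak solution.

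Each border edge $e_i$ at $v$ is adjacent to exactly two of the four quadrants, and the edge weak solution $E_{e_i}$ is the subsystem of $E^+$ obtained by deleting the two $\Ebar_Q$ for the non-adjacent quadrants. I would then peel off these two extra push-offs one at a time: at each intermediate stage the remaining system still contains the $A$- and $B$-reducing disks supplied by the two push-offs used by $e_i$, hence is still a weak solution, and Lemma~\ref{lemma:redundant} guarantees that the algorithm's output is preserved. Thus $E^+$ converges with $E_{e_i}$ for $i = 1, 2$, from which Proposition~\ref{prop:vertex} follows.

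The main technical obstacle will be rigorously constructing the disjoint embedded push-offs $\Ebar_Q$ realizing the prescribed intersection patterns, working panel by panel through Figure~\ref{fig:vertexresolved}. Particular care is needed when $A$- or $B$-labels arise not from visible leaves but from hidden annuli or from $A$-annulus components in the sense of Section~\ref{sect:forbidden}; in those cases $\Ebar_Q$ must be chosen to reproduce the correct pattern of \emph{essential} circles in $T$ after resolution, which may require matching the global collar structure of $\Ebar$ in $M$ rather than merely performing a local saddle-resolution near $\rho_\pm$.
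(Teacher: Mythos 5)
Your overall strategy --- build a common enlarged system $E^+$ and invoke Lemma~\ref{lemma:redundant} to compare each incident edge weak solution to $E^+$ --- is exactly the paper's strategy, and the rest of your setup (push-offs $\Ebar_Q$, one per quadrant) is also on the right track. But the specific construction has a genuine gap: the four push-offs $\Ebar_Q$ cannot in general be made mutually disjoint, so $E^+$ as you define it is not an embedded system. The obstruction is the pair of push-offs corresponding to the quadrants of the \emph{dangerous} diagonal, i.e.\ the quadrants whose resolutions at $\rho_+$ and $\rho_-$ have opposite signs. Concretely, in a local model where $\Ebar$ is the zero level of a bicollar coordinate, one of these two push-offs must lie on the positive side near $\rho_+$ and on the negative side near $\rho_-$, while the other does the reverse; any two such parallel copies of $\Ebar$ are forced to cross along a curve somewhere between the two tangency points. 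This is precisely why the paper isolates the ``aligned'' versus ``dangerous'' diagonal distinction: the aligned pair $\Ebar_\pm$ can coexist disjointly (they are push-offs to the two sides of $\Ebar$), and a single further push-off into any one adjacent quadrant can be sandwiched between them, but all four together cannot.

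The paper works around this by only ever bundling \emph{three} parallel copies at a time, always including both aligned-diagonal push-offs. When the union of the two incident edges' quadrants fails to contain the dangerous diagonal (Case~1), a single application of Lemma~\ref{lemma:redundant} to the triple suffices, essentially as you intended. But when the two edges between them involve the dangerous diagonal (Cases~2 and~3), the paper makes a two-step relay: compare the first edge weak solution to the ``diagonal system'' $\{\Ebar_+,\Ebar_-\}$ via one triple $E^{12}$, then compare the second edge weak solution to the same diagonal system via a different triple $E^{21}$. Your proposal would be correct if you replace the single enlarged system $E^+$ with this relay through the aligned diagonal, and you should also verify (as the paper does via Proposition~\ref{prop:diagonals}) that the diagonal labeling $A/B$ opposite $B/A$ never occurs, since that would leave no suitable triple at all.
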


\begin{proof}  There is an important feature distinguishing between the two diagonals in a labeling diagram around a vertex in $\Gamma$.  
%$\arraycolsep = 2.0pt
%   \begin{array}{c|c}
%     \bullet &   \\
%     \hline
%       & \bullet
%   \end{array}$ versus
%  $\arraycolsep = 2.0pt
%   \begin{array}{c|c}
%      & \bullet \\
%     \hline
%     \bullet &
%   \end{array}$, a feature which we first describe. 
 Put $T$ in the position determined by the vertex of $\Gamma$, so that $E = E_s$ and $T = T_t$ are tangent at two points $\rho = \rho_{\pm}$.  We assume that $\rho_{\pm}$ lie on the same component $\Ebar$ of $E$; if they lie on different components of $E$ the proof is easier.   

Let $\Ebar_{\pm}$ be slight push-offs of $\Ebar$ to each of its sides.  Then the disks $\Ebar_{\pm}$ correspond to positionings of $\Ebar$ that lie in diagonally opposite quadrants of the graphic, since in moving from one to the other, the resolution of each of the tangencies at $\rho_{\pm}$ is changed.  The curves of $\Ebar_{+} \cap T$ and $\Ebar_{-} \cap T$ are visibly disjoint  in $T$, since the disks $\Ebar_{\pm}$ are disjoint in $M$.  Call this the {\em safe} diagonal.  (The other diagonal was called the {\em dangerous diagonal} in \cite{FS}. In Figure \ref{fig:resolved} the antidiagonal is safe and the main diagonal is dangerous.)
\bigskip

The argument will be symmetric in $A$ and $B$ and also indifferent to symmetries of the quadrants about the vertex, so, following Proposition \ref{prop:diagonals}, there are just two cases to consider,  corresponding to the labelings:
$\arraycolsep = 2.0pt
   \begin{array}{c|c}
     A & A \\
     \hline
     A & B
   \end{array}$ and 
   $\arraycolsep = 2.0pt 
   \begin{array}{c|c}
     A & A \\
     \hline
     B & B
   \end{array}$.

{\bf Case 1:} The labelings around the vertex are 
$\arraycolsep = 2.0pt
   \begin{array}{c|c}
     A & A \\
     \hline
     A & B
   \end{array}$; 
   and the antidiagonal 
   $\arraycolsep = 2.0pt
   \begin{array}{c|c}
      & \bullet \\
     \hline
     \bullet &
   \end{array}$ is safe.

   In this case, replace the component $\Ebar$ in $E$ by three parallel components, namely, the two components $\Ebar_{\pm}$ representing the antidiagonal, and a component $\Ebar_B$ representing the quadrant labeled $B$.  Call the resulting weakly aligned disk/sphere set $E^+$.  

Deleting exactly $\Ebar_+$ from $E^+$ gives the border edge alignment for one boundary edge and deleting exactly $\Ebar_-$ gives the border edge alignment for the other boundary edge. Now apply Lemma  \ref{lemma:redundant}: both of these alignments are inherited from that of $E^+$ and so they are congruent.

{\bf Case 2:} The labelings around the vertex are 
$\arraycolsep = 2.0pt
   \begin{array}{c|c}
     A & A \\
     \hline
     A & B
   \end{array}$; 
   and the main diagonal 
   $\arraycolsep = 2.0pt
   \begin{array}{c|c}
     \bullet &   \\
     \hline
       & \bullet
   \end{array}$  is safe.
%   \bigskip
   
In a similar fashion, replace $\Ebar$ in $E$ by three parallel components:  $\Ebar_+$ representing the upper left quadrant, $\Ebar_-$ representing the lower right quadrant and a component $\Ebar^{12}$ representing the upper right quadrant.  Call the resulting weakly aligned disk/sphere set $E^{12}$.  
 
Deleting exactly $\Ebar_+$ from $E^{12}$ gives the boundary edge alignment for the right boundary edge; deleting exactly $\Ebar^{12}$ gives a weakly aligned disk/sphere set we call here the {\em diagonal set}.  By  Lemma  \ref{lemma:redundant} the alignment on the diagonal set given by the algorithm coincides with that inherited from the alignment of $E^{12}$, as does the boundary edge alignment.  By Proposition \ref{prop:parallel} the two alignments are congruent. 

Now repeat the argument using the disk/sphere set $E^{21}$ obtained by replacing $\Ebar^{12}$ with a component $\Ebar^{21}$ representing the lower left quadrant. The argument shows that the alignment coming from the diagonal set is congruent to the border edge alignment for the lower boundary edge.  Therefore the border edge alignments representing the two boundary edges are congruent.

{\bf Case 3:}  The labelings around the vertex are
$\arraycolsep = 2.0pt 
   \begin{array}{c|c}
     A & A \\
     \hline
     B & B
   \end{array}$.
 In this case we may as well assume the main diagonal is safe.  Then a minor variant of the argument for Case 2 suffices.
 \end{proof}

\end{document}